\documentclass[11pt,reqno]{amsart}

\usepackage[margin=1in]{geometry}


\title[Nonlinear maximum principles for dissipative linear nonlocal operators and applications]{Nonlinear maximum principles for dissipative linear nonlocal operators and applications}
\date{\today}

\author{Peter Constantin}
\address{Department of Mathematics,
The University of Chicago, 5734 University Ave.,
Chicago, IL 60637} \email{\tt const@cs.uchicago.edu}

\author{Vlad Vicol}
\address{Department of Mathematics,
The University of Chicago, 5734 University Ave.,
Chicago, IL 60637} \email{\tt vicol@math.uchicago.edu}

\usepackage{amsfonts,amsmath,latexsym,amssymb,verbatim,amsbsy,times,color,mathrsfs,amsthm}

\usepackage{hyperref}

\theoremstyle{plain}
\newtheorem{theorem}{Theorem}[section]
\newtheorem{definition}[theorem]{Definition}

\newtheorem{proposition}[theorem]{Proposition}
\newtheorem{corollary}[theorem]{Corollary}

\theoremstyle{definition}
\newtheorem{remark}[theorem]{Remark}

\def\tilde{\widetilde}

\numberwithin{equation}{section}

\renewcommand\hat{\widehat}
\def\ZZ{{\mathbb Z}}

\def\RR{{\mathbb R}}
\def\TT{{\mathbb T}}
\def\Sphere{{\mathbb S}}
\def\Schwartz{{\mathscr S}}
\def\RSZ{{\mathcal R}}
\def\LL{{\mathcal L}}

\newcommand{\BB}[2]{(B$_{#1,#2}$)}
\newcommand{\fr}{\frac}
\newcommand{\pa}{\partial}
\newcommand{\bn}{\begin{align}}
\newcommand{\en}{\end{align}}
\newcommand{\la}{\label}



\begin{document}


\begin{abstract} We obtain a family of nonlinear maximum principles for linear dissipative nonlocal operators, that are general, robust, and versatile. We use these nonlinear bounds to provide transparent proofs of global regularity for critical SQG and critical d-dimensional Burgers equations. In addition we give applications of the nonlinear maximum principle to the global regularity of a slightly dissipative anti-symmetric perturbation of 2d incompressible Euler equations and generalized fractional dissipative 2d Boussinesq equations.    
\end{abstract}


\subjclass[2000]{35Q35,76B03}
\keywords{Nonlinear lower bound, Maximum-principle, Fractional Laplacian, Anti-symmetrically forced Euler equations, Nonlocal dissipation.}

\maketitle

\section{Introduction}\label{sec:intro}
How can a {\em linear} operator obey  a {\em nonlinear} maximum principle?  We are referring to shape-dependent bounds, of the type
\begin{align}
(\Lambda^{\alpha} g)(\bar{x}) \ge \fr{g(\bar{x})^{1+\alpha}}{c\, m^{\alpha}} \label{eq:intro}
\end{align}
where $\Lambda = \sqrt{-\Delta}$, $g =\partial f$ is a scalar function, the directional derivative $\pa$ of some other scalar function $f$, $\bar{x}$ is a point in $\RR^d$ where $g$ attains its maximum, $m= \|f\|_{L^{\infty}}$, $c>0$ is a constant, and $0<\alpha<2$. In fact, the bound \eqref{eq:intro} scales linearly with $f$  and correctly with respect to dilations (as it should) but it has a nonlinear dependence on the maximum of $g$. We refer to such inequalities as nonlinear maximum principles. They are  true for fractional powers of the Laplacian and for many other nonlocal dissipative operators, and they are versatile and robust.  

When studying nonlinear evolution equations, we often encounter  situations in which the equation has some conserved quantities, but these a priori controlled quantities are not strong enough to guarantee global existence of smooth solutions. In fact smooth solutions may break down. The basic example of the Burgers equation
$$
\theta_t + \theta\theta_x =0
$$
is worth keeping in mind. We take $x\in\RR$. The norms $\|\theta\|_{L^p}$ are conserved under smooth evolution, $1\le p\le\infty$. Taking a derivative $g=\theta_x$
we have
$$
g_t +\theta g_x +g^2=0.
$$
This blows up in finite time. If one adds dissipation,
$$
\theta_t + \Lambda^{\alpha}\theta  + \theta\theta_x =0
$$
then $g$ obeys
$$
g_t + \Lambda^{\alpha} g + \theta g_x +g^2=0.
$$ 
This still blows up for $\alpha<1$, and does not blow up for $\alpha\ge 1$  \cite{KNS}. The reader can sense already how easily the regularity result would follow from the nonlinear maximum principle in the subcritical  $\alpha>1$  case. At the critical exponent $\alpha =1$, the nonlinear maximum principle readily proves the global regularity of solutions with small $L^{\infty}$ norms. In order to remove this restriction one has to recognize an additional structure in the equation: the stability of small shocks. This is discussed in further detail in Section~\ref{sec:Burgers} below.

A similar situation is encountered in the study of the dissipative SQG equation.
\begin{align}
\partial_t \theta + u \cdot \nabla \theta +\Lambda^\alpha \theta = 0, \label{eq:motivation:1}
\end{align}
with divergence-free velocity $u$, related to $\theta$ by a constitutive law that puts $u$ on par with $\theta$. Here and throughout this paper we denote the Zygmund operator {by} $\Lambda = (-\Delta)^{1/2}$. The $L^p$ norms of $\theta$ are non-increasing in time under smooth evolution, and it is known that smooth solutions persist for $\alpha\ge 1$ (\cite{CV}, \cite{CW0}, \cite{KN1}, \cite{KN2}, \cite{KNV}, \cite{R}, and many more). The case $\alpha=1$ is universally termed  ``critical'', although it is not known yet if a critical change in behavior actually does occur at $\alpha =1$ (this occurs for Burgers, and so the name is well justified there). We have many analogous situations in PDE of hydrodynamic origin. The ``critical'' cases, are cases in which easy proofs break down, and when regularity indeed persists, the proofs are usually ingenious, involved and implicit. In the case of SQG, there are two quite different main proof ideas. The approach of \cite{KNV} is to find a modulus of continuity that is invariant in time. The interplay between nonlocal dissipation and nonlinearity is used in a subtle and very original way. The proof of \cite{CV} follows a strategy that has been associated to DeGiorgi: the existence of an a priori integral bound and dilation invariance are exploited by zooming in to small scales. In that proof the crucial step is a passage from $L^{\infty}$ information to $C^{\alpha}$ information. An alternative proof of $C^{\alpha}$ regularity has been recently obtained in \cite{KN2}, by a duality method, exploiting the co-evolution of molecules. 

In this paper we provide a new, transparent proof of global regularity for critical SQG based on the nonlinear maximum principle. The proof has two parts. The first part shows that if a bounded solution has {\em only small shocks} (OSS), a technical term that we define precisely in \eqref{def:oss} below, then it is a smooth solution. In the second part we show that if a solution has only small shocks to start with, then it does have only small shocks for all time. Both parts are proved using appropriate nonlinear maximum principles. We exemplify the same strategy for the Burgers equation, in any spatial dimension. The first part essentially shows how having OSS is a way of assuring that the dissipation beats the nonlinearity. In order to prove the second part we follow the structure of the equation that gave the conservation of $L^{\infty}$ norm, but we do it for displacements. Localizing to small displacements and requiring that
the resulting equation has a weak maximum principle leads to a localizer family, obeying a universal differential inequality, and it is the nature of this inequality that determines whether or not the persistence of the OSS condition takes place or not.

Nonlinear maximum principles are not relegated to fractional powers of the Laplacian. We give examples of other operators that have a nonlinear maximum principle in the study of an anti-symmetric, nonlocal perturbation of the the Euler equations. In fact, we prove that in the presence of an {\em arbitrarily weak} nonlocal dissipation, the anti-symmetric perturbation of the 2D Euler equation is globally regular. 

We also show global regularity for  mixed fractionally-dissipative 2D Boussinesq equations, under a certain condition on the powers of the fluid and the temperature dissipation.

Throughout this paper we make the convention that $c_1, c_2, \ldots$ denote positive universal constants, which may depend on the dimension of the space or on other parameters of the equation. On the other hand,  we shall denote by $C_1,C_2,\ldots$ constants which may depend on certain $L^p$ norms of the initial data.

\section{Nonlinear Maximum Principles} \label{sec:max}

We recall that the fractional power of the (negative) Laplacian, which may be defined via the Fourier transform as
\begin{align*}
\left( \Lambda^\alpha g\right)^{\hat{\ }} (\xi) = |\xi|^{\alpha} \hat{g}(\xi)
\end{align*}
is given in real variables, when  $0<\alpha<2$, as the principal value of the integral
\begin{align*}
\Lambda^\alpha g(x) &=  c_{d,\alpha}\; P.V. \int_{\RR^{d}} \frac{g( x) - g(x-y)}{|y|^{d+\alpha}}\; dy
\end{align*}
where $c_{d,\alpha} = \pi^{-(\alpha+d/2)} \Gamma(\alpha/2 + d/2) \Gamma(-\alpha/2)^{-1}$ is a normalizing constant, which degenerates as $\alpha \rightarrow 2$ and as $\alpha \rightarrow 0$. Although in this paper we may sometimes omit the $P.V.$ in front of the integral defining $\Lambda^\alpha$, the integral is always understood in the principal value sense.

The main result of this section is the following {\em nonlinear lower bound} on the fractional Laplacian, evaluated at the maximum of a smooth function. 
\begin{theorem} [\bf $L^\infty$ nonlinear lower bound]\label{thm:lowerbound}
Let $f \in \Schwartz(\RR^{d})$. For a fixed $k\in \{1,\ldots,d\}$, let $g(x) = \partial_{k} f(x)$. Assume that $\bar x \in \RR^{d}$ such that $g(\bar x) = \max_{x \in \RR^{d}} g(x) >0$. Then  we have
\begin{align}
\Lambda^\alpha g (\bar x) \geq \frac{ g(\bar x)^{1+\alpha} }{c \Vert f \Vert_{L^{\infty}}^{\alpha}} \label{eq:lowerbound}
\end{align}
for $\alpha \in (0,2)$, and some universal positive constant $c=c(d,\alpha)$ which may be computed explicitly. 
\end{theorem}

Nonlinear lower bounds appeared in the recent work of Kiselev and Nazarov~\cite{KN2}, where they use a nonlinear lower bound to estimate the fractional Laplacian evaluated at the maximum of a function, in terms of the $L^1$ or $L^2$ norm of the function. The main difference here is that we go forward one derivative in regularity: nonlinear information on $\nabla f$ is obtained from information of $f$.

\begin{proof}[Proof of Theorem~\ref{thm:lowerbound}]
Let $R > 0$ be fixed, to be chosen later, and let $\chi$ be a radially non-decreasing smooth cut-off function, which vanishes on $|x|\leq 1$ and is identically $1$ on $|x|\geq 2$, and $|\nabla \chi| \leq 4$. We have
\begin{align}
\Lambda^\alpha g(\bar x) 
&= c_{d,\alpha} \int_{\RR^{d}} \frac{g(\bar x) - g(\bar x-y)}{|y|^{d+\alpha}}\; dy \notag\\
&\geq c_{d,\alpha} \int_{\RR^{d}} \frac{g(\bar x) - g(\bar x-y)}{|y|^{d+\alpha}} \chi(y/R) \;dy \notag\\
&\geq c_{d,\alpha} g(\bar x) \int_{|y|\geq 2R} \frac{dy}{|y|^{d+\alpha}} - c_{d,\alpha}\int_{\RR^{d}} |f(\bar x - y)| \left| \partial_{y_k} \frac{\chi(y/R)}{|y|^{d+\alpha}} \right| \; dy \label{eq:lower:g:1}
\end{align}
where in the last inequality we integrated by parts since $g = \partial_{k} f$, and $c_{d,\alpha}$ is the normalization constant of the fractional Laplacian. After a short calculation it follows from \eqref{eq:lower:g:1} that
\begin{align}
\Lambda^\alpha g(\bar x) &\geq c_1 \frac{g(\bar x)}{R^{\alpha}} - c_2 \frac{\Vert f \Vert_{L^{\infty}}}{R^{\alpha +1}}\label{eq:lower:g:2}
\end{align}
where $c_1 = |\Sphere^{d-1}| c_{d,\alpha} 2^{-\alpha} \alpha^{-1}$, and $c_2 = |\Sphere^{d-1}|c_{d,\alpha} (4 d + \alpha) \alpha^{-1}$. Inserting
\begin{align*}
R = \frac{2 c_2 \Vert f \Vert_{L^{\infty}}}{c_1 g(\bar x)}
\end{align*}
into estimate \eqref{eq:lower:g:2} concludes the proof of the theorem. Moreover, the constant $c = c(d,\alpha)$ in \eqref{eq:lowerbound} may be taken explicitly to be $\alpha 2^{(1+\alpha)^2} (4+d)^\alpha |\Sphere^{d-1}|^{-1} c_{d,\alpha}^{-1}$. 
\end{proof}


If we have more a priori information on $f$, such as a bound in $C^\delta(\RR^d)$ for some $\delta>0$, or respectively less information of $f$, such as $f \in L^p(\RR^d)$ for some $p \geq 1$, then the following results complement Theorem~\ref{thm:lowerbound}:
\begin{theorem}[\bf $C^\delta$ and $L^p$ nonlinear lower bound] \label{thm:lowerbound:extended}
  Let $f \in \Schwartz(\RR^d)$, $g = \partial_k f$ for some $k \in \{1,\ldots,d\}$, and let $\bar x \in \RR^d$  be such that $g(\bar x) = \max_{x\in \RR^d} g(x)$. Then we have
  \begin{align}
    \Lambda^\alpha g(\bar x) \geq \frac{ g(\bar x)^{1+ \frac{\alpha}{1-\delta}}}{ c \Vert f \Vert_{C^\delta}^{\frac{\alpha}{1-\delta}}}\label{eq:Holder:lowerbound}
  \end{align}
  for $\delta \in (0,1)$, and a positive constant $c=c(d,\alpha,\delta)$. In addition, the bound
  \begin{align}
   \Lambda^\alpha g(\bar x) \geq  \frac{ g(\bar x)^{1+ \frac{\alpha p}{d+p}}}{ c \Vert f \Vert_{L^p}^{\frac{\alpha p}{d+p}}} \label{eq:Lp:lowerbound}
  \end{align}
  holds for any $p \geq 1$, for some constant $c=c(d,\alpha,p)$.
\end{theorem}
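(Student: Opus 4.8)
The plan is to follow the architecture of the proof of Theorem~\ref{thm:lowerbound} essentially verbatim, changing only the way the term carrying $f$ is estimated. As there, I fix $R>0$, insert the radial cutoff $\chi(y/R)$ (which discards only nonnegative contributions, since $\bar x$ is a maximum of $g$ and hence $g(\bar x)-g(\bar x - y)\ge 0$), and integrate by parts using $g=\pa_k f$ to move the derivative onto the kernel. The ``good'' term is untouched and still produces
\begin{align*}
c_{d,\alpha}\, g(\bar x)\int_{|y|\ge 2R}\frac{dy}{|y|^{d+\alpha}} = c_1\,\frac{g(\bar x)}{R^\alpha}.
\end{align*}
Everything new happens in the ``bad'' term $\int_{\RR^d} f(\bar x - y)\,K(y)\,dy$, where $K(y)=\pa_{y_k}\big(\chi(y/R)|y|^{-d-\alpha}\big)$ is supported in $|y|\ge R$ and satisfies $|K(y)|\le c\,|y|^{-d-\alpha-1}$.

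For the $C^\delta$ bound \eqref{eq:Holder:lowerbound} I would first record that $K$ has zero integral, $\int_{\RR^d}K(y)\,dy=0$, because it is the $y_k$-derivative of a function decaying like $|y|^{-d-\alpha}$. This lets me subtract the constant $f(\bar x)$ and rewrite the bad term as $\int (f(\bar x - y)-f(\bar x))K(y)\,dy$, after which the Hölder bound $|f(\bar x - y)-f(\bar x)|\le \|f\|_{C^\delta}|y|^\delta$ gives
\begin{align*}
\left|\int_{\RR^d} f(\bar x - y) K(y)\,dy\right| \le c\,\|f\|_{C^\delta}\int_{|y|\ge R}|y|^{\delta - d-\alpha-1}\,dy \le c_2\,\frac{\|f\|_{C^\delta}}{R^{\alpha+1-\delta}},
\end{align*}
the radial integral converging precisely because $\delta<1<\alpha+1$. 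Collecting terms yields $\Lambda^\alpha g(\bar x)\ge c_1 g(\bar x)R^{-\alpha} - c_2\|f\|_{C^\delta}R^{-(\alpha+1-\delta)}$, and choosing $R$ comparable to $(\|f\|_{C^\delta}/g(\bar x))^{1/(1-\delta)}$ balances the two terms and produces the exponent $1+\alpha/(1-\delta)$.

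For the $L^p$ bound \eqref{eq:Lp:lowerbound} the subtraction trick is unavailable, since a nonzero constant is not in $L^p(\RR^d)$ for $p<\infty$, so instead I would estimate the bad term directly by Hölder's inequality, $\big|\int f(\bar x - y)K(y)\,dy\big|\le \|f\|_{L^p}\|K\|_{L^{p'}}$ with $1/p+1/p'=1$. Using $|K(y)|\le c|y|^{-d-\alpha-1}$ on $|y|\ge R$, a direct computation gives $\|K\|_{L^{p'}}\le c\,R^{-(\alpha+1+d/p)}$ (the case $p=1$, $p'=\infty$, being the pointwise supremum), so that $\Lambda^\alpha g(\bar x)\ge c_1 g(\bar x)R^{-\alpha} - c_2\|f\|_{L^p}R^{-(\alpha+1+d/p)}$; optimizing with $R\sim(\|f\|_{L^p}/g(\bar x))^{p/(p+d)}$ yields the exponent $1+\alpha p/(d+p)$. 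I expect the only real obstacle to be bookkeeping: verifying the integrability thresholds and tracking the powers of $R$ carefully so that the final exponents come out exactly as stated. The genuinely new ideas are small and localized, namely the zero-mean subtraction that unlocks the Hölder modulus in the $C^\delta$ case, and the recognition that in the $L^p$ case one must use Hölder's inequality against the explicit $L^{p'}$ norm of the kernel rather than any constant-subtraction or pointwise bound.
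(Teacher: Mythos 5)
Your proposal is correct and follows essentially the same route as the paper: the same soft cutoff $\chi(y/R)$, the same integration by parts onto the kernel, the same two-term lower bound, and the same optimization in $R$; your zero-mean observation for $K$ is just the paper's step of writing the bad term as $\partial_{y_k}\bigl(f(\bar x)-f(\bar x-y)\bigr)$ (using $\partial_{y_k}f(\bar x)=0$) read in the other direction, and the $L^p$ case is handled by H\"older's inequality against $\Vert K\Vert_{L^{p'}}$ exactly as in the paper.
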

\begin{proof}[Proof of Theorem~\ref{thm:lowerbound:extended}]
Let $R>0$, to be chosen later, and let $\chi$ be the same smooth cut-off function from the proof of Theorem~\ref{thm:lowerbound}.
In order to prove \eqref{eq:Holder:lowerbound}, similarly to \eqref{eq:lower:g:1}, we estimate
\begin{align*}
\Lambda^\alpha g(\bar x) &= c_{d,\alpha} \int_{\RR^{d}} \frac{g(\bar x) - g(x-y)}{|y|^{d+\alpha}}\; dy \notag\\
&\geq c_{d,\alpha} g(\bar x) \int_{\RR^d} \frac{\chi(y/R)}{|y|^{d+\alpha}} dy - c_{d,\alpha} \left| \int_{\RR^d} \partial_{y_k} \left(f(\bar x) - f(\bar x - y) \right) \frac{\chi(y/R)}{|y|^{d+\alpha}} dy \right|\notag\\
&\geq c_{d,\alpha} g(\bar x)\int_{|y|\geq 2R} \frac{1}{|y|^{d+\alpha}} dy - c_{d,\alpha} \Vert f \Vert_{C^\delta} \int_{\RR^d} |y|^\delta \left| \partial_{y_k}  \frac{\chi(y/R)}{|y|^{d+\alpha}} \right| dy\notag\\
&\geq c_1  \frac{g(\bar x)}{R^\alpha}  - c_2 \frac{\Vert f \Vert_{C^\delta} }{R^{\alpha+1-\delta}}
\end{align*}
where $c_1 = c_1(d,\alpha)$, and $c_2 = c_2(d,\alpha,\delta)$ are positive constants, which may be computed explicitly. In the above estimate we have used that $\partial_{y_k} f(\bar x)= 0$, and have integrated by parts in $y_k$. Letting $R^{1-\delta} =2 c_2 \Vert f \Vert_{C^\delta}/ (c_1 g(\bar x))$ concludes the proof of the lower bound \eqref{eq:Holder:lowerbound}.

In order to prove the corresponding lower bound in terms of $\Vert f \Vert_{L^p}$, we use \eqref{eq:lower:g:1} and the H\"older inequality
\begin{align*}
 \Lambda^\alpha g(\bar x) & \geq  c_{d,\alpha} g(\bar x) \int_{|y|\geq 2R} \frac{dy}{|y|^{d+\alpha}} - c_{d,\alpha}\int_{\RR^{d}} |f(\bar x - y)| \left| \partial_{y_k} \frac{\chi(y/R)}{|y|^{d+\alpha}} \right| \; dy \notag\\
  &\geq c_1 \frac{g(\bar x)}{R^\alpha} - c_2 \frac{\Vert f \Vert_{L^p}}{R^{\alpha+1+ \frac dp}}
\end{align*}
where $c_1 = c_1 (d,\alpha)$ and $c_2 = c_2(d,\alpha,p)$ are explicitly computable positive constants. Letting $R^{1+ d/p} = 2 c_2 \Vert f \Vert_{L^p} / (c_1 g(\bar x))$ concludes the proof of \eqref{eq:Lp:lowerbound} and of the lemma.
\end{proof}

A similar nonlinear lower bound to \eqref{eq:lowerbound} may also be obtained if the function is  $\TT^{d} = [-\pi,\pi]^{d}$ periodic.
\begin{theorem}[\bf $L^\infty$ periodic lower bound]\label{thm:periodic}
Let $f \in C^{\infty}(\TT^{d})$ and $g = \partial_{k} f$. Given $\bar x \in \TT^{d}$ such that $g(\bar x) = \max_{x\in\TT^{d}} g(x)$, there exits a positive constant $c$ such that either
\begin{align}
g(\bar x) \leq c \Vert f \Vert_{L^{\infty}} \label{eq:periodic:lower:1}
\end{align}
or
\begin{align}
\Lambda^\alpha g(\bar x) \geq \frac{ g(\bar x)^{1+\alpha}}{c \Vert f \Vert_{L^{\infty}}^{\alpha}} \label{eq:periodic:lower:2}
\end{align}
holds.
\end{theorem}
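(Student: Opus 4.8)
The plan is to mimic the whole-space argument from the proof of Theorem~\ref{thm:lowerbound}, with the single essential modification that on the bounded domain $\TT^d$ the truncation radius $R$ may no longer be chosen arbitrarily large. I would begin by writing the periodic fractional Laplacian as a principal value singular integral against the periodized kernel $K(y) = \sum_{j \in \ZZ^d} |y - 2\pi j|^{-d-\alpha}$ over the fundamental domain,
\begin{align*}
\Lambda^\alpha g(\bar x) = c_{d,\alpha}\, P.V. \int_{\TT^d} (g(\bar x) - g(\bar x - y)) K(y)\, dy,
\end{align*}
and record the two structural facts that drive the estimate: since $\bar x$ is a global maximum of $g$ one has $g(\bar x) - g(\bar x - y) \geq 0$ for every $y$, and since every summand in $K$ is positive one has $K(y) \geq |y|^{-d-\alpha}$. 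Together these let me discard both the far (periodized) tail of the kernel and, after inserting the cut-off $\chi(y/R)$, the near-diagonal contribution, exactly as in \eqref{eq:lower:g:1}. (The principal value is in fact a convergent integral near the origin, since $\nabla g(\bar x) = 0$ forces $g(\bar x) - g(\bar x - y) = O(|y|^2)$.)

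Next I would impose $R \leq R_0$ for a fixed $R_0$ comparable to the period (say $R_0 = \pi/4$), which guarantees that the shell $\{2R \leq |y|\} \cap \TT^d$ contains a full spherical annulus and hence that the diagonal term still obeys $c_{d,\alpha} g(\bar x)\int_{\{2R\le|y|\}\cap\TT^d} |y|^{-d-\alpha}\,dy \geq c_1 g(\bar x) R^{-\alpha}$. For the remaining term I would integrate by parts in $y_k$, transferring the derivative from $g = \partial_k f$ onto the kernel to produce the favorable $R^{-1}$ gain, bounding it by $c_2 \|f\|_{L^\infty} R^{-\alpha-1}$. The upshot is the analogue of \eqref{eq:lower:g:2},
\begin{align*}
\Lambda^\alpha g(\bar x) \geq c_1 \frac{g(\bar x)}{R^\alpha} - c_2 \frac{\|f\|_{L^\infty}}{R^{\alpha+1}},
\end{align*}
valid now only in the restricted range $0 < R \leq R_0$.

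The dichotomy then falls out of this range restriction. The unconstrained optimizer $R_\ast = 2 c_2 \|f\|_{L^\infty}/(c_1 g(\bar x))$ used in Theorem~\ref{thm:lowerbound} lies in $(0,R_0]$ precisely when $g(\bar x) \geq (2c_2/(c_1 R_0)) \|f\|_{L^\infty}$. If this fails, then $g(\bar x) \leq c\|f\|_{L^\infty}$ with $c = 2c_2/(c_1 R_0)$, which is alternative \eqref{eq:periodic:lower:1}. If it holds, I substitute $R = R_\ast$ into the displayed inequality and recover \eqref{eq:periodic:lower:2} verbatim, as in the whole-space case.

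The main obstacle, and the only place the argument differs substantively from Theorem~\ref{thm:lowerbound}, is the bookkeeping of the periodization. Two points require care: first, confirming that the diagonal integral retains the sharp $R^{-\alpha}$ scaling once the domain is truncated at radius $\sim \pi$ (this forces the upper cap $R \leq R_0$ and is the true source of the dichotomy); and second, controlling the integration-by-parts boundary contribution on $\partial \TT^d$, where the kernel $|y|^{-d-\alpha}$ is bounded but not periodic, so the opposite-face integrals do not cancel. This boundary term is harmless: it is bounded by $c_3 \|f\|_{L^\infty}$ independently of $R$, and since $R \leq R_0 < 1$ it may be absorbed into $c_2 \|f\|_{L^\infty} R^{-\alpha-1}$ at the cost of enlarging $c_2$. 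Everything else is identical to the proof of Theorem~\ref{thm:lowerbound}.
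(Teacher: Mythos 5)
Your proposal is correct and follows essentially the same route as the paper's proof: reduce to the $j=0$ term of the periodized kernel using positivity at the maximum, insert the cut-off at scale $R$, integrate by parts to get the two-term lower bound $c_1 g(\bar x)R^{-\alpha} - c_2\|f\|_{L^\infty}R^{-\alpha-1}$ valid only for $R$ below a fixed fraction of the period, and derive the dichotomy from whether the optimal $R$ falls in that range. The only difference is cosmetic: you make explicit the integration-by-parts boundary contribution on $\partial\TT^d$ and the cap on $R$ needed for the annulus bound, both of which the paper compresses into ``as before.''
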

\begin{proof}[Proof of Theorem~\ref{thm:periodic}]
Recall (cf.~\cite{CC} for instance) that in the periodic setting the integral expression (in principal value) for the fractional Laplacian is
\begin{align*}
\Lambda^\alpha g(\bar x) = c_{d,\alpha} \sum_{j \in \ZZ^{d}} \int_{\TT^{d}} \frac{g(\bar x) - g(\bar x + y)}{|y+j|^{d+\alpha}}\; dy.
\end{align*}
The main contribution to the sum comes from the term with $j = 0$, since only then the kernel is singular. Also, since $\bar x$ is the point of maximum of $g$, all other terms are positive, so be have the lower bound
\begin{align*}
\Lambda^\alpha g(\bar x) &\geq c_{d,\alpha} \int_{\TT^{d}} \frac{g(\bar x) - g(\bar x + y)}{|y|^{d+\alpha}} \chi(y/R)\; dy
\end{align*}
where $\chi$ is a smooth cut-off as in the proof of Theorem~\ref{thm:lowerbound}, and $R>0$ is to be chosen later. As before, we obtain
\begin{align*}
\Lambda^\alpha g(\bar x) &\geq c_{1} g(\bar x) \left( \frac{1}{(2R)^{\alpha}} - \frac{1}{\pi^{\alpha}} \right) - c_{2}\frac{ \Vert f \Vert_{L^{\infty}} }{R^{1+\alpha}}\notag\\
&\geq \frac{c_{1}}{2^{1+\alpha}}  \frac{g(\bar x)}{R^{\alpha}} - c_{2} \frac{ \Vert f \Vert_{L^{\infty}} }{R^{1+\alpha}}
\end{align*}
by requiring that $R< \pi/2^{1+1/\alpha}$. We would like to let $R = c_{2} 2^{2+\alpha} \Vert f \Vert_{L^{\infty}}/(c_{1} g(\bar x))$, in order to obtain
\begin{align*}
\Lambda^\alpha g(\bar x) \geq \frac{ (g(\bar x))^{1+\alpha}}{c_{3} \Vert f \Vert_{L^{\infty}}^{\alpha}},
\end{align*}
where $c_{3} = c_{1}^{1+\alpha}/(2^{1 + 3\alpha + \alpha^{2}}c_{2}^{\alpha})$, but this is only possible, due to the restriction the size of $R$, if
\begin{align}
g(\bar x) \geq c_{4} \Vert f \Vert_{L^{\infty}} \label{eq:periodic:technical}
\end{align}
holds, where $c_{4} = c_{2} 2^{3 + \alpha + 1/\alpha}/(\pi c_{1})$. Letting $c= c_{3} + c_{4}$ concludes the proof of the theorem.
\end{proof}
\begin{remark}
It is clear from the proof of Theorem~\ref{thm:periodic}, that the analogue of Theorem~\ref{thm:lowerbound:extended} also holds in the periodic setting. Namely, either $g(\bar x)$ can be controlled by a multiple of $\Vert f \Vert_{C^{\delta}}$ (respectively $\Vert  f \Vert_{L^{p}}$), or \eqref{eq:Holder:lowerbound} (respectively \eqref{eq:Lp:lowerbound}) holds.
\end{remark}

Lastly, we note that a nonlinear lower bound in the spirit of Theorem~\ref{thm:lowerbound} also holds for the positive scalar quantity $|\nabla f|^2$. This bound turns out to be very useful in applications. More precisely, we have:
\begin{theorem}[\bf Pointwise nonlinear lower bound]\label{thm:lowerbound:scalar}
Let $f \in \Schwartz(\RR^{d})$. Then  we have the pointwise bound
\begin{align}
\nabla f  (x) \cdot \Lambda^\alpha \nabla f (x) \geq \frac{1}{2} \Lambda^\alpha |\nabla f(x)|^2 + \frac{ |\nabla f (x)|^{2+\alpha} }{c \Vert f \Vert_{L^{\infty}}^{\alpha}} \label{eq:lowerbound2}
\end{align}
for $\alpha \in (0,2)$, and some universal positive constant $c=c(d,\alpha)$.
\end{theorem}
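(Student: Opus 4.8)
The plan is to first isolate, by a pointwise algebraic identity, the ``good'' dissipative term hidden inside $\nabla f\cdot\Lambda^\alpha\nabla f$, and then to bound that term from below by imitating the argument of Theorem~\ref{thm:lowerbound}. Writing $v=\nabla f$ and inserting the elementary identity $|v(x)|^2-|v(x-y)|^2 = 2 v(x)\cdot\big(v(x)-v(x-y)\big)-|v(x)-v(x-y)|^2$ into the principal value integral defining $\Lambda^\alpha|v|^2$, I would obtain the exact pointwise relation
\begin{align*}
\nabla f(x)\cdot\Lambda^\alpha\nabla f(x) = \frac12\Lambda^\alpha|\nabla f(x)|^2 + \frac{c_{d,\alpha}}{2}\int_{\RR^d}\frac{|\nabla f(x)-\nabla f(x-y)|^2}{|y|^{d+\alpha}}\,dy.
\end{align*}
In view of this identity, the theorem reduces to the single lower bound
\begin{align*}
\frac{c_{d,\alpha}}{2}\int_{\RR^d}\frac{|\nabla f(x)-\nabla f(x-y)|^2}{|y|^{d+\alpha}}\,dy \geq \frac{|\nabla f(x)|^{2+\alpha}}{c\Vert f\Vert_{L^\infty}^\alpha}.
\end{align*}
Note that when $\nabla f(x)=0$ the inequality is trivial: the left side of the theorem vanishes, while $\Lambda^\alpha|\nabla f|^2(x)\le 0$ at a zero of the nonnegative function $|\nabla f|^2$, so I may henceforth assume $\nabla f(x)\neq 0$.

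The key reduction to the scalar setting of Theorem~\ref{thm:lowerbound} is to freeze the direction $e=\nabla f(x)/|\nabla f(x)|$ and to set $h(z)=e\cdot\nabla f(z)=\partial_e f(z)$, a derivative of $f$ in a \emph{fixed} direction, with $h(x)=|\nabla f(x)|$. Projecting the vector difference onto $e$ gives $|\nabla f(x)-\nabla f(x-y)|^2\ge \big(h(x)-h(x-y)\big)^2$, so it suffices to bound $\int (h(x)-h(x-y))^2|y|^{-d-\alpha}\,dy$ from below. The point I expect to matter most is recognizing that, precisely because $e$ is a constant vector, one has $h(x-y)=-\partial_{y_e}\big(f(x-y)\big)$, which is exactly the structure that makes the integration by parts of Theorem~\ref{thm:lowerbound} applicable.

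To finish, I would insert the cutoff $\chi(y/R)$ from the proof of Theorem~\ref{thm:lowerbound}, expand the square, and discard the nonnegative term $h(x-y)^2$ to get
\begin{align*}
\int_{\RR^d}\frac{(h(x)-h(x-y))^2}{|y|^{d+\alpha}}\chi(y/R)\,dy \ge h(x)^2\int_{|y|\ge 2R}\frac{dy}{|y|^{d+\alpha}} - 2h(x)\int_{\RR^d}\frac{h(x-y)}{|y|^{d+\alpha}}\chi(y/R)\,dy.
\end{align*}
The last integral is integrated by parts in the $e$-direction and bounded by $\Vert f\Vert_{L^\infty}\int|\partial_{y_e}(\chi(y/R)|y|^{-d-\alpha})|\,dy\le c_2\Vert f\Vert_{L^\infty}R^{-\alpha-1}$, exactly as in \eqref{eq:lower:g:2}, yielding $c_1|\nabla f(x)|^2R^{-\alpha}-2c_2|\nabla f(x)|\Vert f\Vert_{L^\infty}R^{-\alpha-1}$. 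Optimizing with the choice $R=4c_2\Vert f\Vert_{L^\infty}/(c_1|\nabla f(x)|)$ makes the second term half the first and produces the claimed power $|\nabla f(x)|^{2+\alpha}/\Vert f\Vert_{L^\infty}^\alpha$, which together with the identity above closes the argument.

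The genuinely new ingredient compared to Theorem~\ref{thm:lowerbound} — and the step I would treat most carefully — is that $x$ is an \emph{arbitrary} point rather than a maximum of $g$, so no sign information is available for $h(x)-h(x-y)$; the squaring in the dissipation term is what rescues the argument, since it allows me to discard $h(x-y)^2\ge 0$ and then estimate the remaining linear cross term by $\Vert f\Vert_{L^\infty}$ after integrating by parts. Everything else is a routine repetition of the computation behind \eqref{eq:lower:g:1}--\eqref{eq:lower:g:2}.
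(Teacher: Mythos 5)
Your proposal is correct and follows essentially the same route as the paper: the pointwise identity splitting off the dissipation term $D$, a cutoff $\chi(y/R)$, expansion of the square discarding the nonnegative piece, integration by parts of the cross term against $\Vert f\Vert_{L^\infty}$, and optimization in $R$. The only cosmetic difference is that you first project the gradient increment onto the fixed direction $e=\nabla f(x)/|\nabla f(x)|$ before expanding, whereas the paper keeps the full vector and uses $|\nabla f(x)-\nabla f(x+y)|^2\ge |\nabla f(x)|^2-2\partial_j f(x)\partial_j f(x+y)$ with a sum over $j$; both yield the same bound.
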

\begin{proof}[Proof of Theorem~\ref{thm:lowerbound:scalar}]
We use the pointwise identity 
\begin{align}
\nabla f(x)  \cdot \Lambda^\alpha \nabla f (x) 
= \frac{1}{2}\Lambda^\alpha ( |\nabla f|^2) (x) 
+ \frac{1}{2} D
\label{eq:pointwise}
\end{align}
where we have denoted (in principal value sense)
\begin{align}
D = c_{d,\alpha} \int_{\RR^d}  \frac{ |\nabla f(x) - \nabla f (x+y)|^2}{|y|^{d+\alpha}} dy. \label{eq:DDD:def}
\end{align}
The pointwise identity \eqref{eq:pointwise} follows from the argument in~\cite{CC} (see also~\cite{C1}). Here $c_{d,\alpha}$ is the normalizing constant of the integral expression of the fractional Laplacian. We now bound from below
\begin{align}
D 
& \geq c_{d,\alpha} \int_{\RR^d}  \frac{ |\nabla f(x) - \nabla f (x+y)|^2}{|y|^{d+\alpha}} \chi(y/R) dy 
\label{eq:gradient:lower:1}
\end{align}
where as before $\chi$ is a smooth radially non-increasing cut-off function that vanishes on $|x|\leq 1/2$ and is identically $1$ on $|x|\geq 1$. For all $y$ we have 
$$
|\nabla f(\bar x) - \nabla f(\bar x + y)|^2 \geq |\nabla f (\bar x)|^2 - 2 \partial_j f(\bar x) \partial_j f(\bar x + y),
$$ 
where the summation convention on repeated indices is used. Hence, from \eqref{eq:gradient:lower:1} it follows that
\begin{align}
D& \geq c_{d,\alpha} |\nabla f(\bar x)|^2 \int_{\RR^d} \frac{ \chi(y/R) }{|y|^{d+\alpha}} dy - c_{d,\alpha} |\partial_j f(\bar x)| \left| \int_{\RR^2} \partial_j f(\bar x + y) \frac{\chi(y/R)}{|y|^{d+\alpha}} dy \right| \notag\\
&\geq c_{d,\alpha} |\nabla f(\bar x)|^2 \int_{|y|\geq R} \frac{1}{|y|^{d+\alpha}} dy   - c_{d,\alpha} |\partial_j f(\bar x)| \Vert f \Vert_{L^\infty}  \int_{\RR^2} \left| \partial_j \frac{\chi(y/R)}{|y|^{d+\alpha}} \right| dy\notag\\
&\geq c_1 \frac{|\nabla f (\bar x)|^2}{R^\alpha} - c_2 \frac{|\nabla f(\bar x)|\; \Vert f \Vert_{L^\infty}}{R^{1+\alpha}}
\label{eq:gradient:lower:2}
\end{align}
for some positive constants $c_1$ and $c_2$ which depend only on $d, \alpha$, and $\chi$.
Letting $$R = \frac{c_2 \Vert f \Vert_{L^\infty}}{2 c_1 |\nabla f (\bar x)|}$$ concludes the proof of the Theorem.
\end{proof}
\begin{corollary}\label{cor:lowerbound:scalar}
Let $f \in \Schwartz(\RR^{d})$. Assume that there exists $\bar x \in \RR^{d}$ such that the maximum of  $|\nabla f (x) |^2$ is attained at $\bar x$. Then  we have
\begin{align*}
\nabla f (\bar x) \cdot \Lambda^\alpha \nabla f (\bar x) \geq \frac{ |\nabla f (\bar x)|^{2+\alpha} }{c \Vert f \Vert_{L^{\infty}}^{\alpha}}
\end{align*}
for $\alpha \in (0,2)$, and some universal positive constant $c=c(d,\alpha)$.
\end{corollary}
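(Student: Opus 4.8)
The plan is to read the corollary off directly from the pointwise inequality \eqref{eq:lowerbound2} of Theorem~\ref{thm:lowerbound:scalar}, which holds at every point $x \in \RR^{d}$. Evaluating that inequality at the maximum point $\bar x$ gives
\[
\nabla f(\bar x) \cdot \Lambda^\alpha \nabla f(\bar x) \geq \frac{1}{2}\, \Lambda^\alpha |\nabla f|^2 (\bar x) + \frac{|\nabla f(\bar x)|^{2+\alpha}}{c \Vert f \Vert_{L^{\infty}}^{\alpha}},
\]
so the statement will follow as soon as I can show that the first term on the right-hand side is nonnegative, that is, that $\Lambda^\alpha |\nabla f|^2(\bar x) \geq 0$.

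To see this, I would set $h = |\nabla f|^2$, which is again a Schwartz function since $f \in \Schwartz(\RR^{d})$, and use the singular-integral representation
\[
\Lambda^\alpha h(\bar x) = c_{d,\alpha}\, \text{P.V.} \int_{\RR^{d}} \frac{h(\bar x) - h(\bar x - y)}{|y|^{d+\alpha}}\, dy.
\]
By hypothesis $\bar x$ is a global maximum of $h$, so the numerator $h(\bar x) - h(\bar x - y) \geq 0$ for every $y$. Moreover, since $h$ is smooth and $\nabla h(\bar x) = 0$, a Taylor expansion shows the numerator is $O(|y|^2)$ near the origin, so the integrand is absolutely integrable there for $\alpha < 2$ and the principal value is a genuine nonnegative integral. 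Hence $\Lambda^\alpha |\nabla f|^2(\bar x) \geq 0$, and discarding this term from the displayed inequality concludes the argument, with the constant $c = c(d,\alpha)$ inherited unchanged from Theorem~\ref{thm:lowerbound:scalar}.

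There is essentially no obstacle here: the corollary is a clean specialization of Theorem~\ref{thm:lowerbound:scalar}, obtained by exploiting that the extra diffusive term in \eqref{eq:lowerbound2} has a favorable sign precisely at a maximum of $|\nabla f|^2$. The only point that deserves care is the justification that $\Lambda^\alpha$ is nonnegative at a maximum---a standard maximum-principle fact for the fractional Laplacian---which I would verify through the pointwise sign of the integrand as above rather than invoke as a black box.
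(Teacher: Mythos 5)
Your proposal is correct and follows exactly the paper's own argument: the corollary is deduced from Theorem~\ref{thm:lowerbound:scalar} by observing that $\Lambda^\alpha |\nabla f|^2(\bar x) \geq 0$ at the global maximum $\bar x$. Your extra verification of this sign via the pointwise nonnegativity of the integrand in the principal-value representation is a sound elaboration of what the paper states without proof.
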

\begin{proof}[Proof of Corollary~\ref{cor:lowerbound:scalar}]
This follows from Theorem~\ref{thm:lowerbound:scalar} by noting that at the maximum of $|\nabla f|^2$, which is by assumption attained at $\bar x$, the term $\Lambda^\alpha (|\nabla f|^2)(\bar x)$ is non-negative. 
\end{proof}

\section{Applications to the dissipative SQG equations} \label{sec:SQG}

The dissipative surface quasi-geostrophic equation (SQG)
\begin{align}
  &\partial_t \theta + u \cdot \nabla \theta + \Lambda^\alpha \theta = 0 \label{eq:SQG:1}\\
  & u = \RSZ^\perp \theta \label{eq:SQG:2}\\
  & \theta(\cdot,0) = \theta_0 \label{eq:SQG:3}
\end{align}
has recently attracted a lot of attention in the mathematical literature, see for instance the extended list of references in \cite{CCW}. Here $0 < \alpha < 2$. While the global regularity in the sub-critical case $\alpha>1$ has been long ago established \cite{R}, \cite{CW0}, the global regularity in the critical case $\alpha=1$ has been proven only recently \cite{CV,KN2,KNV}. In the super-critical case $\alpha<1$, with large initial data, only eventual regularity~\cite{D,Silvestre} and conditional regularity~\cite{CW08,CW09} have been established.

In this section we establish give {\em a new proof} of the global well-posedness of \eqref{eq:SQG:1}--\eqref{eq:SQG:2} in the critical case $\alpha = 1$. The proof is based on the nonlinear maximum principle established earlier in section~\ref{sec:max}, and is split in two steps. The first step shows that if a solution of the SQG equation has ``only small  shocks'', then it is regular (cf.~Theorem~\ref{thm:step1} below), while the second step shows that if the initial data has only small shocks, then the solution has only small shocks for all later times (cf.~Theorem~\ref{thm:step2} below). To be more precise let us introduce:

\begin{definition}[{\bf Only Small Shocks}]\label{def:oss}
Let $\delta>0$, and $t>0$. We say  $\theta(x,t)$ has the {\em $OSS_\delta$ property}, if there exists an $L>0$ such that
\begin{align}
\sup_{ \{ (x,y) \colon |x-y|<L \}} |\theta(x,t) - \theta(y,t) | \leq \delta. \label{eq:Pdelta}
\end{align}
Moreover, for $T>0$, we say  $\theta(x,t)$ has the {\em uniform $OSS_{\delta}$ property} on $[0,T]$, if there exists an $L>0$ such that
\begin{align}
\sup_{ \{ (x,y,t) \colon |x-y|<L,\; 0\leq t \leq T \}} |\theta(x,t) - \theta(y,t) | \leq \delta. \label{eq:PdeltaT}
\end{align}
\end{definition}
Our first result states that the uniform $OSS_\delta$ property implies regularity of the solution:
\begin{theorem}[\bf From Only Small Shocks to regularity] \label{thm:step1}
There exists a $\delta_0>0$, depending only on $\| \theta_0 \|_{L^\infty}$, so that if $\theta$ is a bounded weak solution of the critical SQG equation with the uniform $OSS_{\delta_0}$ property on $[0,T]$, then it is a smooth solution on $[0,T]$. Moreover 
\begin{align}
\sup_{t\in[0,T]} \| \nabla \theta(\cdot,t) \|_{L^\infty} \leq C(\|\theta_0\|_{L^\infty},\|\nabla \theta_0\|_{L^\infty},L)
\label{eq:SQG:Gradient:Bound}
\end{align} 
where $L$ is defined as in \eqref{eq:PdeltaT}.
\end{theorem}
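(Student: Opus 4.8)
The plan is to reduce the smoothness claim to a single a priori bound on $\|\nabla\theta\|_{L^\infty}$, uniform on $[0,T]$, and then derive that bound from the pointwise nonlinear lower bound of Corollary~\ref{cor:lowerbound:scalar}. First I would record the $L^\infty$ maximum principle $\|\theta(\cdot,t)\|_{L^\infty}\le\|\theta_0\|_{L^\infty}$ for critical SQG, so that $\|\theta_0\|_{L^\infty}$ is available as a conserved control throughout. Working with smooth solutions (after a standard vanishing-viscosity/mollification approximation that legitimizes the manipulations for the weak solution), set $v=\nabla\theta$ and differentiate \eqref{eq:SQG:1}; since $u=\RSZ^\perp\theta$ is divergence free, $q=|v|^2$ obeys
\begin{align*}
\tfrac12(\partial_t + u\cdot\nabla)q + v\cdot\Lambda v + v_j v_k\,\partial_j u_k = 0 .
\end{align*}
At a point $\bar x=\bar x(t)$ where $q(\cdot,t)$ attains its spatial maximum (using the Schwartz/periodic setting and, in the latter case, the periodic analogue from Theorem~\ref{thm:periodic}) the transport term vanishes, and $\tfrac{d}{dt}\|\nabla\theta\|_{L^\infty}^2$ is controlled by $\partial_t q(\bar x)$ via the usual envelope/Rademacher argument.

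Next I would invoke Corollary~\ref{cor:lowerbound:scalar} with $\alpha=1$ at $\bar x$, which gives the dissipative cubic gain $v(\bar x)\cdot\Lambda v(\bar x)\ge |\nabla\theta(\bar x)|^{3}/(c\|\theta_0\|_{L^\infty})$, whence
\begin{align*}
\tfrac12\frac{d}{dt}\|\nabla\theta\|_{L^\infty}^2 \le -\frac{\|\nabla\theta\|_{L^\infty}^{3}}{c\|\theta_0\|_{L^\infty}} + \big|v_j(\bar x)v_k(\bar x)\,\partial_j u_k(\bar x)\big| .
\end{align*}
Everything then reduces to estimating the vortex-stretching term $S:=v_j(\bar x)v_k(\bar x)\,\partial_j u_k(\bar x)$. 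The essential difficulty of the critical case is visible here: since $\partial_j u_k$ is a zero-order Calder\'on--Zygmund singular integral of $\nabla\theta$, the term $S$ is itself cubic in $\nabla\theta$ and is therefore \emph{exactly} balanced against the dissipation. The whole point is to use the $OSS_{\delta_0}$ hypothesis to force the genuinely cubic part of $S$ to carry a small factor proportional to $\delta_0$.

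The hard part is precisely this estimate of $S$, and it is where I expect the main obstacle to lie. I would represent $\partial_j u_k(\bar x)$, via $u=\RSZ^\perp\theta$, as a principal-value integral against a kernel homogeneous of degree $-(d+1)$ acting on the increment $\theta(\bar x-y)-\theta(\bar x)$ (the local/diagonal term drops because $\epsilon_{kj}v_jv_k=0$), and split the integral at two scales $R<L$: an inner region $|y|\le R$, a middle region $R\le|y|\le L$, and an outer region $|y|\ge L$. In the inner region the principal-value cancellation (the kernel is even, so the linear Taylor term integrates to zero) leaves a contribution controlled only by local smoothness; in the middle region the uniform $OSS_\delta$ bound \eqref{eq:PdeltaT} gives $|\theta(\bar x-y)-\theta(\bar x)|\le\delta$, while the degree $-(d+1)$ kernel integrated over $|y|\ge R$ produces a factor $R^{-1}$; in the outer region $|\theta(\bar x-y)-\theta(\bar x)|\le 2\|\theta_0\|_{L^\infty}$ against an integrable tail, producing a factor $L^{-1}$. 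Choosing the free scale $R\sim\|\theta_0\|_{L^\infty}/|\nabla\theta(\bar x)|$ (the same scale that optimizes the nonlinear lower bound) I expect
\begin{align*}
|S| \le c_*\,\delta\,\frac{|\nabla\theta(\bar x)|^{3}}{\|\theta_0\|_{L^\infty}} + C\big(\|\theta_0\|_{L^\infty},L\big)\,|\nabla\theta(\bar x)|^{2},
\end{align*}
where the first term collects the genuinely cubic contributions (now weighted by $\delta$ thanks to OSS) and the second collects all subcubic remainders. One then fixes $\delta_0$ so small, depending only on $\|\theta_0\|_{L^\infty}$, that $c_*\delta_0\le 1/(2c)$, which absorbs the bad cubic term into half of the dissipation.

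Finally I would close the estimate. Writing $Y(t)=\|\nabla\theta(\cdot,t)\|_{L^\infty}$, the inequalities combine into the Riccati-type differential inequality
\begin{align*}
\frac{dY}{dt} \le -\frac{1}{2c\,\|\theta_0\|_{L^\infty}}\,Y^{2} + C\big(\|\theta_0\|_{L^\infty},L\big)\,Y,
\end{align*}
whose solutions remain bounded by $\max\{Y(0),\,2c\,C\,\|\theta_0\|_{L^\infty}\}$ for all time; this yields \eqref{eq:SQG:Gradient:Bound}, and higher regularity follows by standard parabolic bootstrapping once $\nabla\theta\in L^\infty$. I expect the genuine difficulty to reside entirely in the scale-resolved estimate of $S$: the decomposition must be arranged so that the only cubic-in-$|\nabla\theta(\bar x)|$ survivor is the middle-region term governed by $\delta$, while the inner region (where the kernel is most singular and where second derivatives of $\theta$ threaten to appear) contributes only at the absorbable quadratic order after the principal-value cancellation is exploited. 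A secondary, more routine, obstacle is the justification of the pointwise maximum computation for merely bounded weak solutions, which I would handle through the regularized approximations and passage of the uniform bound to the limit.
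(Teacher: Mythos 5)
Your overall strategy is the paper's: evolve $|\nabla\theta|^2$, extract a cubic lower bound from the dissipation via $\|\theta_0\|_{L^\infty}$, split the singular-integral representation of $\nabla u$ at two scales $\rho<L$, use $OSS_{\delta_0}$ in the medium range to tag the dangerous cubic term with a factor $\delta_0$, and close with a Riccati-type inequality. The medium and outer estimates and the choice $\rho\sim 1/|\nabla\theta(\bar x)|$ are essentially as in the paper.

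The genuine gap is the inner region, which you correctly flag as the danger zone but do not actually resolve. You claim that after principal-value cancellation the piece $|y|\le\rho$ contributes ``only at the absorbable quadratic order.'' It does not: the cancellation of the mean-zero kernel kills only the constant and linear terms of the Taylor expansion of $\theta(\bar x-y)$, and the remainder is $O(|y|^2\|\nabla^2\theta\|_{L^\infty})$ against a kernel of degree $-(d+1)$, yielding a bound $C\rho\,\|\nabla^2\theta\|_{L^\infty}$ in which the second derivative is precisely the uncontrolled quantity (while the cruder bound $|\theta(\bar x-y)-\theta(\bar x)|\le |y|\|\nabla\theta\|_{L^\infty}$ gives a logarithmically divergent integral). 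The paper's device, which your setup forecloses, is to \emph{retain} the dissipation $D(x,t)=c_0\,P.V.\!\int |\nabla\theta(x)-\nabla\theta(y)|^2|x-y|^{-d-1}\,dy$ in the inequality --- i.e.\ to use the pointwise identity of Theorem~\ref{thm:lowerbound:scalar} rather than Corollary~\ref{cor:lowerbound:scalar} --- spend only part of $D$ on the cubic lower bound, and estimate the inner piece by Cauchy--Schwarz against the remainder: $|\nabla u_{in}|\le c\sqrt{D\rho}$, hence $|\nabla u_{in}|\,|\nabla\theta|^2\le D/8+c\rho|\nabla\theta|^4$, which is reabsorbed into $D/4$ for $\rho\sim(\|\theta_0\|_{L^\infty}|\nabla\theta|)^{-1}$. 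Because you evaluate at the maximum and invoke Corollary~\ref{cor:lowerbound:scalar}, which discards $D$ entirely, you have nothing left with which to absorb the inner contribution, and the estimate of $S$ you ``expect'' cannot be established as written. Two smaller points: the correct smallness condition is $\delta_0\sim\|\theta_0\|_{L^\infty}^{-2}$ (your displayed bound for $S$ would make $\delta_0$ an absolute constant, which is dimensionally inconsistent); and the paper replaces the envelope/Rademacher step by multiplying the pointwise inequality with $\varphi'(|\nabla\theta|^2)$ for a convex $\varphi$ vanishing below the threshold, which yields a weak maximum principle without assuming the supremum is attained.
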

In fact we give the proof of \eqref{eq:SQG:Gradient:Bound} assuming $\theta$ is a smooth function on $[0,T)$. These arguments can then be then made formal by adding a hyper-regularization $-\varepsilon \Delta \theta$ to the equations. Since the proof given below carries through to the regularized equations, and the bounds obtained are $\varepsilon$-independent, we may pass to a limit as $\varepsilon \to 0$, and obtain \eqref{eq:SQG:Gradient:Bound} for the SQG equations.
Any subcritical regularization would do. The main point is that the estimate
\ref{eq:SQG:Gradient:Bound} is uniform in time. 
\begin{proof}[Proof of Theorem~\ref{thm:step1}]
It is clear from \eqref{eq:SQG:1} that $\Vert \theta(\cdot,t) \Vert_{L^\infty} \leq \Vert \theta_0 \Vert_{L^\infty}$, and hence a suitable a priori estimate on $\Vert \nabla \theta \Vert_{L^\infty}$ implies that $\theta$ is in fact a smooth solution. For this purpose, apply $\nabla$ to \eqref{eq:SQG:1} and multiply by $\nabla\theta$ to obtain
\begin{align}
\frac{1}{2} (\partial_t + u \cdot \nabla) |\nabla \theta|^2 + \nabla \theta \cdot \Lambda \nabla \theta + \nabla u \colon \nabla \theta \cdot \nabla \theta = 0, \label{eq:pointwise:evolution}
\end{align}
where as usual we denote $\Lambda = (-\Delta)^{1/2}$. Recall cf.~\eqref{eq:lowerbound2} that we have the pointwise identity
\begin{align}
\nabla \theta(x) \cdot \Lambda \nabla \theta(x) = \frac{1}{2} \Lambda |\nabla \theta(x)|^2  + \frac{1}{2} D(x,t) \label{eq:pointwise:D}
\end{align}
where
\begin{align*}
D(x,t) = c_0 P.V. \int_{\RR^d} \frac{ |\nabla \theta(x,t) - \nabla \theta(y,t)|^2}{|x-y|^{d+1}} dy,
\end{align*}
with $c_0 = c_{d,1}$ being the normalizing constant of $\Lambda$. As shown in the proof of Theorem~\ref{thm:lowerbound:scalar}, there exists a non-dimensional constant $c_1>0$ such that
\begin{align}
\frac{1}{4} D(x,t) \geq c_1 \frac{ |\nabla \theta(x,t)|^3}{\| \theta_0 \|_{L^\infty}}. \label{eq:pointwise:D:lower}
\end{align}
Here we also used the $L^\infty$ maximum principle for $\theta$. Summarizing, \eqref{eq:pointwise:evolution}, \eqref{eq:pointwise:D}, and \eqref{eq:pointwise:D:lower} give
\begin{align}
\frac{1}{2} \left( \partial_t + u \cdot \nabla + \Lambda \right) |\nabla \theta(x,t)|^2 + c_1 \frac{ |\nabla \theta(x,t)|^3}{\| \theta_0 \|_{L^\infty}} + \frac{D(x,t)}{4} \leq |\nabla u (x,t)| \; |\nabla \theta(x,t)|^2. \label{eq:pointwise:ODE:1}
\end{align}

We estimate the absolute value of 
\begin{align*}
\nabla u(x,t) = \mathcal{R}^\perp \nabla \theta(x,t) =  P.V. \int_{\RR^d} \frac{(x-y)^\perp}{|x-y|^{d+1}} \left(\nabla \theta(x,t) -\nabla \theta(y,t) \right) dy
\end{align*}
by splitting {\em softly} (i.e. with smooth cut-offs) into an inner piece $|x-y|\leq \rho$, for some $\rho = \rho(x,t) >0$ to be chosen later, a medium piece  $\rho < |x-y| < L$ (where $L>0$ will be chosen later), and an outer piece $|x-y| \geq L$.
Bounding the inner piece follows directly from the Cauchy-Schwartz inequality:
\begin{align*}
|\nabla u_{in}(x,t)|\leq c_2 \sqrt{ D(x,t) \rho(x,t)}
\end{align*}
for some positive constant $c_2$ depending only on the smooth cut-off. We choose $\rho$ so that
\begin{align}
c_2 \sqrt{D \rho} |\nabla \theta|^2 \leq \frac{D}{8} + 2 c_2^2 \rho |\nabla \theta|^4  \leq \frac{D}{4} \label{eq:uin:bound}
\end{align}
which in view of \eqref{eq:pointwise:D:lower} is true if we let
\begin{align}
\rho(x,t) = \frac{c_1}{4 c_2^2 \|\theta_0\|_{L^\infty} |\nabla \theta(x,t)| } \label{eq:rho:def:1}.
\end{align}
Now we estimate $\nabla u_{med}$ using the $OSS_\delta$ property. Since $\int_{|x-y|=r} (x-y)^\perp |x-y|^{-d-1} \nabla \theta(x) dy = 0$ for $r>0$, and the cut-off used in the soft representation of the integral is radial, integrating by parts in $y$ we obtain
\begin{align}
|\nabla u_{med}(x,t)| \leq c_3 \int_{\rho \leq |x-y| \leq L} \frac{ |\theta(x,t) - \theta(y,t)|}{|x-y|^{d+1}} dy. \label{eq:umed:bound}
\end{align}
Using the assumption that $\theta(x,t)$ has the $OSS_{\delta_0}$ property with corresponding length $L$, we obtain 
\begin{align}
|\nabla u_{med}(x,t)| \leq c_4 \frac{\delta_0}{\rho} = c_5  \|\theta_0\|_{L^\infty} \delta_0 |\nabla \theta(x,t)|\label{eq:umed:bound:2}
\end{align}
by \eqref{eq:rho:def:1}, where $c_4 = c_3 |{\mathbb S}^{d-1}|$ and $c_5 = 4 c_4 c_2^2 / c_1$. In order to make sure that $|\nabla u_{med}(x,t)| |\nabla \theta(x,t)|^2$ does not exceed  half of the positive term in \eqref{eq:pointwise:ODE:1}, i.e. $c_1 |\nabla \theta(x,y)|^3/ |\theta_0\|_{L^\infty} $, we let 
\begin{align}
\delta_0  = \frac{c_1}{2 c_5 \| \theta_0 \|_{L^\infty}^2}. \label{eq:delta0:def}
\end{align}
Of course the value of $L$ corresponding to the above fixed $\delta_0$, might not be larger than the value of $\rho$ as defined in \eqref{eq:rho:def:1}, case in which we have $u_{med} = 0$. Lastly we bound $\nabla u_{out}$ similarly to \eqref{eq:umed:bound}, and obtain
\begin{align}
|\nabla u_{out}(x,t)| \leq 2 c_4 \frac{\|\theta_0\|_{L^\infty}}{L}. \label{eq:uout:bound}
\end{align}
Therefore, from \eqref{eq:pointwise:ODE:1}, \eqref{eq:uin:bound}, \eqref{eq:umed:bound:2}, \eqref{eq:delta0:def}, and \eqref{eq:uout:bound} we arrive at the pointwise inequality
\begin{align}
\frac{1}{2} \left( \partial_t + u \cdot \nabla + \Lambda \right) |\nabla \theta(x,t)|^2  +  \frac{c_1 |\nabla \theta(x,t)|^3}{2 \| \theta_0 \|_{L^\infty}}  \leq 2 c_4 \frac{\|\theta_0\|_{L^\infty} |\nabla \theta(x,t)|^2}{L} \label{eq:pointwise:ODE:2}
\end{align}
which gives 
$$
( \partial_t + u \cdot \nabla + \Lambda ) |\nabla \theta(x,t)|^2 \leq 0, \qquad  \mbox{whenever}\qquad |\nabla \theta(x,t)| \geq  \frac{4 c_4 \|\theta_0\|_{L^\infty}^2}{c_1 L} = C_{*}.
$$ 
Reading this at a maximum of $|\nabla \theta(x,t)|$ (if it exists) would at least formally conclude the proof of the Theorem. Indeed, at a point of maximum the gradient is $0$ and the fractional Laplacian is positive, implying  that whenever $\max |\nabla\theta (\cdot,t)|$ reaches $C_{*}$, its time derivative is negative and hence it can never exceed the threshold level $C_{*}$. 

In order to make the argument described here rigorous, one may proceed as follows. Let $\varphi(r) : [0,\infty) \to [0,\infty)$ be a non-decreasing $C^{2}$ convex function that vanishes identically for $0 \leq r\leq \max\{ \|\nabla \theta_{0}\|_{L^{\infty}}^{2}, C_{*}^{2}\}$, is strictly positive for $r > \max\{ \|\nabla \theta_{0}\|_{L^{\infty}}^{2}, C_{*}^{2}\}$ and grows algebraically at infinity. Due to the convexity of $\varphi$ as in~\cite{C1,CC} we have
\begin{align*}
\varphi'( |\nabla \theta(x,t)|^{2} ) \Lambda |\nabla \theta (x,t)|^{2} \geq \Lambda \varphi (|\nabla \theta(x,t)|^{2})
\end{align*}
pointwise in $x$. Thus, we may multiply \eqref{eq:pointwise:ODE:2} by $\varphi'(|\nabla \theta(x,t)|^{2})$ and obtain
\begin{align}
\frac{1}{2} (\partial_{t} + u \cdot \nabla + \Lambda) \varphi(|\nabla \theta(x,t)|^{2}) \leq \frac{c_{1}|\nabla \theta(x,t)|^{2}}{2 \|\theta_{0}\|_{L^{\infty}}}  \bigl (C_{*} - |\nabla \theta(x,t)| \bigr) \varphi'(|\nabla \theta(x,t)|^{2})  \leq 0
\label{eq:pointwise:ODE:3}
\end{align}
since $\varphi' (|\nabla \theta|^{2})= 0$ for $|\nabla \theta|< C_{*}$. In particular, it follows from \eqref{eq:pointwise:ODE:3} that $\varphi(|\nabla \theta|^{2})$ satisfies the weak maximum principle
\begin{align}
 \| \varphi(|\nabla \theta(\cdot,t)|^{2})\|_{L^{\infty}} \leq \| \varphi(|\nabla \theta_{0}|^{2}) \|_{L^{\infty}}.\label{eq:ODE:conclude}
\end{align}
The above is for instance obtained from $L^{2p}$ estimates on \eqref{eq:pointwise:ODE:3}, using that $\int f^{2p-1} \Lambda f \geq 0$ for smooth functions $f$, and sending $p \to \infty$.
To conclude, we note that by design $\varphi( |\nabla \theta_{0}(x)|^{2}) = 0$ a.e., and hence from \eqref{eq:ODE:conclude} we obtain $\varphi( |\nabla \theta(x,t)|^{2}) = 0$ a.e., or equivalently  $| \nabla \theta(x,t)| \leq \max\{ \|\nabla \theta_{0}\|_{L^{\infty}},C_{*}\}$ for a.e. $x$, and all $t\in [0,T]$.
\end{proof}

\begin{theorem}[\bf Stability of Only Small Shocks] \label{thm:step2}
Let $\delta_0>0$ and $T>0$ be arbitrary. If $\theta_0$ has the $OSS_{\delta_0/8}$ property, then a bounded weak solution $\theta$ of the critical SQG equation has the uniform $OSS_{\delta_0}$ property on $[0,T]$.
\end{theorem}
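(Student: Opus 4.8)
The plan is to track the finite differences of $\theta$ directly and to run a maximum-principle argument in the joint variable (position, displacement), mirroring the proof of the $L^\infty$ bound but ``one difference down.'' Fix a displacement $\xi \in \RR^d$ and set $w_\xi(x,t) = \theta(x,t) - \theta(x-\xi,t)$, with $\Lambda$ understood to act in the $x$ variable. Subtracting \eqref{eq:SQG:1} evaluated at $x$ and at $x-\xi$, and using that $\RSZ^\perp$ commutes with translations (so that $u(x) - u(x-\xi) = \RSZ^\perp w_\xi(x)$) together with $\nabla_x[\theta(x-\xi)] = (\nabla\theta)(x-\xi)$, I would rewrite the transported difference as
\begin{align}
(\partial_t + u\cdot\nabla_x + \Lambda) w_\xi + (\RSZ^\perp w_\xi)\cdot (\nabla\theta)(x-\xi) = 0. \nonumber
\end{align}
The quantity to be controlled is the oscillation $\sup_{|\xi|\le L,\, x} |w_\xi(x,t)|$, and the goal is to show it cannot cross $\delta_0$ on $[0,T]$ once it starts below $\delta_0/8$.

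The key observation is that the stretching term disappears at an \emph{interior} joint maximum. Introducing a localizer/barrier $\psi(\xi)$ in the displacement variable and maximizing $w_\xi(x,t) - \psi(\xi)$ over $(x,\xi)$, if the maximum is attained at $(\bar x, \bar\xi)$ with $\bar\xi$ interior, then $\nabla_x w_{\bar\xi}(\bar x)=0$ forces $(\nabla\theta)(\bar x) = (\nabla\theta)(\bar x - \bar\xi)$, while $\nabla_\xi w = \nabla\psi$ forces $(\nabla\theta)(\bar x - \bar\xi) = \nabla\psi(\bar\xi)$; thus choosing $\psi$ with small gradient makes the stretching term $(\RSZ^\perp w_{\bar\xi})(\bar x)\cdot(\nabla\theta)(\bar x-\bar\xi)$ correspondingly small, and for $\psi \equiv 0$ it vanishes. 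Simultaneously $\bar x$ is a spatial maximum of $w_{\bar\xi}$, so $\Lambda w_{\bar\xi}(\bar x) \ge 0$; in fact the integration-by-parts estimate behind Theorem~\ref{thm:lowerbound}, applied to the difference $w_\xi$, should give the quantitative dissipation $\Lambda w_{\bar\xi}(\bar x) \ge w_{\bar\xi}(\bar x)^2/(c\,\|\theta_0\|_{L^\infty}|\bar\xi|)$. This is the mechanism the introduction calls a \emph{localizer family obeying a universal differential inequality}: demanding that the difference equation retain a weak maximum principle constrains $\psi$, and the constraint is exactly what closes the estimate on the evolving oscillation.

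The hard part is the stretching term $(\RSZ^\perp w_\xi)\cdot(\nabla\theta)(x-\xi)$ in the regime where the displacement is comparable to the localization scale $L$: there $(\nabla\theta)(x-\xi)$ need not vanish, it is precisely the gradient not yet controlled (the output of Theorem~\ref{thm:step1}), and $\RSZ^\perp$ is unbounded on $L^\infty$. I would dominate it exactly as the medium and outer pieces of $\nabla u$ were handled in the proof of Theorem~\ref{thm:step1}: split $\RSZ^\perp w_\xi$ softly into inner, medium, and outer parts, absorb the inner part into the quantitative dissipation above, and bound the medium and outer parts by the oscillation itself through the $OSS$ hypothesis, which is available up to the running time by continuity. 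This yields a pointwise inequality for $w_\xi$ structurally identical to \eqref{eq:pointwise:ODE:2}, in which the dissipation beats the nonlinearity provided the oscillation stays below a threshold proportional to $\delta_0$; the factor $8$ between $\delta_0/8$ and $\delta_0$ is precisely the room needed both for this comparison and for the passage between the displacement variable and the $|x-y|<L$ formulation of the $OSS_\delta$ property in Definition~\ref{def:oss}.

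Finally I would close by a continuity-in-time bootstrap. After reducing to smooth, spatially decaying approximations (adding a subcritical regularization $-\varepsilon\Delta\theta$ with $\varepsilon$-uniform bounds, exactly as justified for Theorem~\ref{thm:step1}), let $T^\ast$ be the first time the oscillation reaches $\delta_0$; at $T^\ast$ the supremum is attained, and at the corresponding interior maximizing point the differential inequality forces the time derivative of the oscillation to be non-positive, contradicting that it is increasing up to $\delta_0$. Hence $T^\ast > T$ and the uniform $OSS_{\delta_0}$ property holds on $[0,T]$. At the rigorous level the weak maximum principle invoked is the one already used in \eqref{eq:ODE:conclude}: test against a convex $\varphi$ vanishing below the threshold and use $\int f^{2p-1}\Lambda f \ge 0$, sending $p\to\infty$. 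I expect the single genuine obstacle to be the quantitative treatment of the stretching term near the scale $L$; everything else is bookkeeping around the vanishing-gradient identity at the interior maximum.
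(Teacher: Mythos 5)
Your overall architecture is the paper's: write the equation for the finite difference, observe that the would-be stretching term is really a transport term in the displacement variable (the paper writes it as $(\delta_h u)\cdot\nabla_h$ acting on $\delta_h\theta$, which is exactly your vanishing-gradient identity at the joint maximum), localize in the displacement variable, and use the nonlinear dissipation bound $D_h \gtrsim |\delta_h\theta|^3/(|h|\,\|\theta_0\|_{L^\infty})$. However, there is a genuine gap at the heart of the argument: you never construct the localizer, and its existence is not ``bookkeeping.'' The localizer $\Psi$ (your $\psi$) must satisfy two competing requirements. On one hand $\Psi'$ must be small enough that the term $\Psi'\,|\delta_h u|\,v$ is dominated by the nonlinear dissipation $c\,v^{3/2}\Phi^{-1/2}/(|h|\,\|\theta_0\|_{L^\infty})$ whenever $v\ge \delta_0^2\Phi/16$; since $|\delta_h u|$ is only controlled as $C_1+C_\infty\log_+ D_h$ (the Riesz transform is unbounded on $L^\infty$, and after absorbing the $\log D_h$ into the dissipation one pays a factor $1+\log(1+\Psi')$), this forces the differential inequality $\Psi'(y)\bigl(1+\log(1+\Psi'(y))\bigr)\le q/y$. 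On the other hand, the $OSS_{\delta_0/8}$ hypothesis on $\theta_0$ only controls $|\delta_h\theta_0|$ for $|h|\le L_0$ with $L_0$ possibly tiny; for larger $|h|$ one only has $|\delta_h\theta_0|\le 2\|\theta_0\|_{L^\infty}\gg\delta_0$, so $\Psi$ must grow without bound in order to make $\|v_0\|_{L^\infty}<\delta_0^2/16$. Whether both can be achieved hinges on the fact that $q/\bigl(y(1+\log(1+q/y))\bigr)$ is \emph{not} integrable near $y=0$, so that $\int_l^y$ of it diverges as $l\to 0$. This borderline non-integrability is the ``universal differential inequality'' the introduction alludes to, and it is the single point on which the theorem stands or falls; your proposal does not address it.

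A secondary issue: you locate the difficulty in the stretching term ``where the displacement is comparable to $L$,'' but at the joint interior maximum the gradient factor is always $\nabla\psi(\bar\xi)$, small by design, regardless of the size of $|\bar\xi|$. The actual difficulty in that term is the unboundedness of $\RSZ^\perp$ on $L^\infty$, i.e., the logarithmic loss $\log_+ D_h$ in the bound for $|\delta_h u|$; the paper handles it with the elementary inequality $Cb\log a\le a/2+Cb\log(2Cb)$ and \emph{not} via the $OSS$ hypothesis at the running time (which would also introduce a circularity you would need to break with a separate continuity argument). Your proposed inner/medium/outer splitting of $\RSZ^\perp w_\xi$ with the medium piece bounded by the evolving oscillation is not obviously wrong, but it is not worked out, and as stated it does not produce the ODE constraint on $\Psi'$ that the initial-data step requires.
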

\begin{proof}[Proof of Theorem~\ref{thm:step2}]
We take now $\delta_h \theta (x,t) = \theta (x+h,t)-\theta(x,t)$. The equation 
obeyed by $\delta_h\theta$ is
\begin{align}
\left (\partial_t + u\cdot\nabla_x +(\delta_h u)\cdot\nabla_h +\Lambda \right)\delta_h\theta= 0
\label{eqd}
\end{align}
where
\begin{align}
(\delta_h u)(x) = u(x+h,t)-u(x,t) = P.V.\int_{\RR^d}\frac{(x-y)^{\perp}}{|x-y|^{d+1}}\delta_h\theta(y,t) dy.
\label{delthu}
\end{align}
This looks like it might have a maximum principle in both $x$ and $h$, but of course, there is no decay as $|h|\to\infty$, and the maximum is not small.
We take 
\begin{align}
\Phi (h) = e^{-\Psi(|h|)}\label{phi}.
\end{align}
The main properties of $\Phi$ are: smooth, radial, strictly positive, non-increasing, $\Phi(0) =1$, and \\
$\lim_{|h|\to \infty}\Phi(|h|) = 0$. We will need therefore $\Psi$ to be positive,  $\Psi'>0$, normalized $\Psi(0) =0$ and $\lim_{l\to\infty}\Psi(l)=\infty$. We shall construct the specific $\Psi$ at the end of the proof.

We multiply \eqref{eqd} by $(\delta_h\theta)\Phi(h)$ and  obtain
\begin{align}
\frac{1}{2}\left(\partial_t + u\cdot\nabla_x + (\delta_h u)\cdot\nabla_h + \Lambda\right) (\delta_h\theta(x,t)^2\Phi(h)) + \frac{1}{2}\Phi(h)D_h = \frac{1}{2}(\delta_h\theta(x,t))^2(\delta_h u)\cdot\nabla_h\Phi(h)
\label{base}
\end{align}
where
\begin{align}
D_h(x,t) = c_0 \int_{\RR^d}\frac{(\delta_h\theta(x,t)-\delta_h\theta(y,t))^2}{|x-y|^{d+1}}dy
\label{dh}
\end{align}
where $c_0$ is the normalizing constant of the integral defining $\Lambda$.
Let us denote by $v=v(x,t;h)$
\begin{align}
v= (\delta_h\theta(x,t))^2\Phi(h)\label{v}
\end{align}
and by $L$ the operator
\begin{align}
L= \partial_t +u\cdot\nabla_x + (\delta_h u)\cdot\nabla_h + \Lambda.
\label{L}
\end{align}
Let us note that
\begin{align}
|\nabla_h\Phi|\Phi(h)^{-1}\le \Psi'(|h|) \label{phineq}
\end{align}
and so, from \eqref{base} and \eqref{phineq} we have
\begin{align}
Lv + \Phi(h)D_h \leq \Psi' |\delta_h u| v \label{ineqone}.
\end{align}
Now we will assume that $\theta_0\in L^p(\RR^d)\cap L^{\infty}(\RR^d)$, for some $1\leq p<\infty$. Then we can bound $\delta_h u$ by splitting as usual in an inner part
\begin{align*}
\delta_h u_{in} (x,t) = P.V. \int_{|x-y|\leq \rho}\frac{(x-y)^{\perp}(\delta_h\theta(x,t)-\delta_h \theta(y,t))}{|x-y|^{d+1}}dy
\end{align*}
a medium part when $\rho<|x-y|\leq R$, and an outer part. 
We note that
\begin{align}
|\delta_h u_{in}(x,t)|\leq c_1 \sqrt{\rho D_h(x,t)}\label{dhuineq}
\end{align}
follows immediately from the Cauchy-Schwartz inequality.
For the medium part we have
\begin{align}
|\delta_h u_{med}(x,t)|\leq c_{\infty}\|\theta_0\|_{L^{\infty}}\log\left(\frac{R}{\rho}\right)\label{uhmed}
\end{align}
while for the outer part, we use integrability in $L^p$ of $\theta$, to obtain
\begin{align}
|\delta_h u_{out}(x,t)|\leq c_p\|\theta_0\|_{L^p}R^{-\frac{d}{p}}.
\label{dhoutineq}
\end{align}
We distinguish between the cases
$D_h\leq 1$ and $D_h>1$. If $D_h\leq 1$ we choose $\rho = R=1$ and
we obtain in this case
\begin{align*}
|\delta_h u(x,t)|\leq c_1+ c_p\|\theta_0\|_{L^p}.
\end{align*}
If $D_h>1$ we choose $\rho=D_h^{-1}$ and $R=1$ to obtain
\begin{align*}
|\delta_h u(x,t)| \leq c_1 + c_{\infty}\|\theta_0\|_{L^{\infty}}\log(D_h) +c_p\|\theta_0\|_{L^p}.
\end{align*}
Summarizing, we obtain
\begin{align}
|\delta_h u(x,t)| \leq  C_1 + C_{\infty}\log_{+}(D_h) 
\label{dhub}
\end{align}
where we set
\begin{align}
C_1 = c_1+ c_p\|\theta_0\|_{L^p}, \quad p<\infty
\label{C1}
\end{align}
and
\begin{align}
C_{\infty} = c_{\infty}\|\theta_0\|_{L^{\infty}}.
\label{Cinfty}
\end{align}

Now we use the elementary inequality
\begin{align}
Cb\log a \le \frac{a}{2} + Cb\log(2Cb) \label{eq:log:inequality}
\end{align}
valid for $C>0, a>0, b>0$. This inequality follows immediately from
\begin{align*}
\frac{a}{2Cb}<e^{\frac{a}{2Cb}}.
\end{align*}
In fact, \eqref{eq:log:inequality} also holds with $\log$ replaced by $\log_+$.
Applying this inequality with $C=\Psi' C_{\infty}$,\; $a=D_h$ and  $b=(\delta_h\theta(x,t))^2$, we obtain
\begin{align}
|\delta_hu(x,t)|\Psi' v \leq \Phi\frac{D_h}{2} + \Big( C_1+
C_{\infty}\log_{+}(2\Psi' C_{\infty}|\delta_h\theta(x,t)|^2)\Big) \Psi' v.
\label{bal}
\end{align} 
Using \eqref{bal} we obtain from \eqref{ineqone}
\begin{align}
Lv + \frac{1}{2}\Phi(h)D_h \leq  \Big( C_1+
C_{\infty}\log_{+}(8\Psi' c_{\infty}\|\theta_0\|_{L^{\infty}}^3)\Big)\Psi' v
\label{lvineq}
\end{align} 
Let us take now $r>0$ and observe that
\begin{align}
D_h(x,t)\geq c_4\frac{(\delta_h\theta(x))^2}{r} -2c_0 (\delta_h\theta(x))\int_{|x-y|\geq r}\frac{\delta_h\theta(y)}{|x-y|^{d+1}}\; \chi\left (\frac{x-y}{r}\right )dy\label{dhlowb}
\end{align}
with $\chi(y)$ a non-negative radial cut-off vanishing for $|y| <1$ and identically  equal to $1$ for $|y| \geq 2$. If $r\geq 3|h|$ we obtain 
\begin{align}
D_h(x,t)\geq c_5\frac{(\delta_h\theta(x,t))^2}{r} - c_6\frac{\|\theta_0\|_{L^{\infty}}|h|}{r^2}|\delta_h\theta(x,t)|
\label{condo}
\end{align}
Without loss of generality, we may assume $c_6\geq 3c_5$. Then we define
\begin{align}
r = \frac{2c_6|h|\|\theta_0\|_{L^{\infty}}}{c_5|\delta_h\theta(x,t)|}\label{rdefine}
\end{align}
In view of the fact that $|\delta_h\theta(x,t)|\leq 2\|\theta_0\|_{L^{\infty}}$,
we have that $r\geq 3|h|$, and from \eqref{condo} we deduce that
\begin{align}
\frac{1}{2}D_h(x,t) \geq c_7\frac{|\delta_h\theta(x,t)|^3}{|h|\|\theta_0\|_{L^{\infty}}}
\label{con}
\end{align}
and consequently
\begin{align}
\frac{1}{2}\Phi D_h(x,t) \geq c_7\frac{v^{\frac{3}{2}}}{|h|\|\theta_0\|_{L^{\infty}}}\Phi^{-\frac{1}{2}}
\label{condi}
\end{align}
Using the lower bound \eqref{condi} in \eqref{lvineq}, we deduce that
\begin{align}
Lv +\frac{c_7}{|h|\|\theta_0\|_{L^{\infty}}}v^{\frac{3}{2}}\Phi^{-\frac{1}{2}}
\leq C_{max}\left(1 + \log(1+ \Psi' )\right) \Psi' v  \label{lvineqn}
\end{align}
where 
\begin{align}
C_{max} = \max\{ C_p, C_{\infty}\}
\label{cmax}
\end{align}
and
\begin{align}
C_p =1 + c_p\|\theta_0\|_{L^p} + c_{\infty}\|\theta_0\|_{L^{\infty}}\log_+\left (8c_{\infty}\|\theta_0\|_{L^{\infty}}^3\right).\label{Cp}
\end{align}
Let us denote
\begin{align} 
q= \frac{\delta_0c_7\|\theta_0\|_{L^{\infty}}}{4C_{max}}.\label{q}
\end{align}
Note that $q$ is a fixed constant that depends in an explicit and  computable manner on $\delta_0$, $\|\theta_0\|_{L^{\infty}}$, and $\|\theta_0\|_{L^p}$ alone. 
Let us now assume that $\Psi$ satisfies
\begin{align}
\Psi'(y)(1+\log(1+\Psi'(y))) \le \frac{q}{y}\label{basipsi}
\end{align}
for all $y>0$. Then we deduce that
\begin{align}
Lv\leq 0\label{walla}
\end{align}
holds whenever  $v\geq \frac{\delta_0^2}{16}\Phi$ and, a forteriori (because $\Phi\leq 1$), whenever $v\geq \frac{\delta_0^2}{16}$.
Let us denote
\begin{align}
F(p) = p(1+\log(1+p))
\label{F}
\end{align}
defined for $p\geq 0$. Clearly
\begin{align*}
F'(p) = 1+ \log(1+p) +\frac{p}{1+p} \geq 1,
\end{align*}
so $F$ is strictly increasing and
\begin{align*}
F(p)\ge p
\end{align*}
because $F(0)=0$. The inequality we need is therefore
\begin{align}
\Psi'(y) \leq F^{-1}\left(\frac{q}{y}\right)
\label{inv}
\end{align}
where $F^{-1}$ is the inverse function.
Now, because $p\leq F$ it follows that 
\begin{align*}
1+\log(1+p)\leq 1+\log(1+F)
\end{align*}
and in view of 
\begin{align*}
p = \frac{F}{1+\log(1+p)}
\end{align*}
we have
\begin{align}
p\geq \frac{F}{1+\log(1+F)}\label{yF}.
\end{align}
Reading this at $p=F^{-1}\left(\frac{q}{y}\right)$, i.e., for $F(p) =\left(\frac{q}{y}\right)$ we have
\begin{align}
\frac{q}{y\left(1 + \log\left (1 +\frac{q}{y}\right)\right)} \leq F^{-1}\left(\frac{q}{y}\right)\label{wall}.
\end{align}
Therefore \eqref{inv} and thus \eqref{basipsi} will be satisfied if 
\begin{align}
\Psi'(y) \leq \frac{q}{y\left(1 + \log\left (1 +\frac{q}{y}\right)\right)}.
\label{ode}
\end{align}
The function
\begin{align*}
x\mapsto \frac{1}{x(1+\log(1+\frac{1}{x}))}
\end{align*}
is not integrable near infinity, nor near zero. This is a good thing.
The right hand side of \eqref{ode} tends to infinity as $y\to 0$.
Let us pick $l>0$ and define
\begin{align}
G_l(y) = \int_l^y\frac{q}{x\left(1+\log\left(1+\frac{q}{x}\right)\right)}dx \label{G}.
\end{align}
We define $\Psi(y)$ to equal identically $\Psi(y) = 0$ for $0\le y\leq \frac{l}{2}$, $\Psi(y) = G_l(y)$ for $y>l$ and satisfying \eqref{ode} for all $y$.
More precisely, we take a smooth function $\phi_l(y)$, equal to $0$ for $0\leq y\leq l/2$, equal to $1$ for $y\geq l$,
satisfying $0\leq \phi_l(y)\leq 1$ for all $y$  and 
define
\begin{align}
\Psi'(y) = \phi_l(y)\frac{q}{y\left(1 + \log\left (1 +\frac{q}{y}\right)\right)}\label{psiprimeformula}
\end{align}
and consequently
\begin{align}
\Psi(y) = \int_0^y\phi_l(y)\frac{q}{y\left(1 + \log\left (1 +\frac{q}{y}\right)\right)}dy\label{psiformula}.
\end{align}
Because $v\in L^1(\RR^{2d})\cap L^{\infty}(\RR^{2d})$ and $L$ has a weak maximum principle we have
\begin{align}
\|v(\cdot, t; \cdot)\|_{L^{\infty}} \le \|v_0(\cdot; \cdot)\|_{L^{\infty}}\label{vinfty}
\end{align}
for all $0\leq t\leq T$ if $\|v_0\|_{L^{\infty}}<\frac{\delta_0^2}{16}$. 
Now, if $\theta_0$ has the property $OSS_{\delta_0/8}$ with with length $ L_0$, and because 
\begin{align*}
\lim_{l\to0}G_l(y) = \infty,
\end{align*}
by choosing $l$ small enough (depending on $L_0$ and $q$) we can assure that
\begin{align*}
\|v_0\|_{L^{\infty}}< \frac{\delta_0^2}{16} 
\end{align*}
and so 
\begin{align}
|v(x,t;h)|< \frac{\delta_0^2}{16}
\label{viq}
\end{align}
for all $x$, $h$ and all $t\leq T$, which means that
\begin{align}
|\delta_h\theta(x,t)|\leq \frac{\delta_0}{4}e^{\frac{1}{2}\Psi(|h|)}\label{final}
\end{align}
holds for all $x$, all $h$ and all $t\leq T$. Therefore $|\delta_h \theta(x,t)| \leq \delta_0$ for all $x$, $t\leq T$, and $|h|\leq \Psi^{-1}(2 \log 4)$.

In order to rigorously justify the maximum-principle-type estimate \eqref{viq}, one may proceed as in the proof of Theorem~\ref{thm:step1}. Namely, we introduce a smooth non-decreasing convex function $\varphi(r)$ which is identically $0$ on $0\leq r \leq \delta_0^2/16$, and positive otherwise. Multiplying \eqref{lvineq} with $\varphi'(v)$, and using the specific choice of $\Psi$ made above we may in fact show that $L \varphi(v(x,t;h)) \leq 0$ pointwise in $x,h$, and $t$. Since $L$ has a weak maximum principle, we obtain \eqref{vinfty} with $v$ replaced by $\varphi(v)$. Our suitable choice of $l$ small enough ensures that $\varphi(v_0) = 0$ a.e., which then proves \eqref{viq}, concluding the proof of the theorem.
\end{proof}

Combining Theorems~\ref{thm:step1} and \ref{thm:step2} we arrive at:
\begin{theorem}[\bf Global well-posedness for critical SQG] \label{thm:SQG:critical}
Let $\theta_0 \in \Schwartz(\RR^d)$. Then there exits a unique global in time smooth solution $\theta(x,t)$ of the initial value problem associated with the critical SQG equation.
\end{theorem}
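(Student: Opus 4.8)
The plan is to assemble the global statement from the two structural results already established, namely Theorem~\ref{thm:step1} (only small shocks imply regularity) and Theorem~\ref{thm:step2} (the only small shocks condition persists in time), together with standard local existence and continuation theory. The governing idea is that these two theorems jointly produce an a priori bound on $\|\nabla\theta\|_{L^\infty}$ that is \emph{uniform in time}, which is exactly the quantity controlling global regularity for critical SQG.

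First I would fix the threshold in the correct logical order. Let $\delta_0 = \delta_0(\|\theta_0\|_{L^\infty}) > 0$ be the constant furnished by Theorem~\ref{thm:step1}. Since $\theta_0 \in \Schwartz(\RR^d)$ is in particular uniformly continuous, there exists a length $L_0 > 0$ such that $|\theta_0(x) - \theta_0(y)| \leq \delta_0/8$ whenever $|x-y| < L_0$; that is, $\theta_0$ has the $OSS_{\delta_0/8}$ property in the sense of \eqref{eq:Pdelta}. Now fix an arbitrary $T > 0$ and invoke the two theorems in sequence. Since $\theta_0$ has the $OSS_{\delta_0/8}$ property, Theorem~\ref{thm:step2} guarantees that a bounded weak solution $\theta$ (which exists globally for $L^2 \cap L^\infty$ data by standard theory) has the uniform $OSS_{\delta_0}$ property on $[0,T]$. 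Feeding this into Theorem~\ref{thm:step1} upgrades $\theta$ to a smooth solution on $[0,T]$ and, crucially, yields the gradient bound \eqref{eq:SQG:Gradient:Bound},
\begin{align*}
\sup_{t\in[0,T]} \|\nabla\theta(\cdot,t)\|_{L^\infty} \leq C\bigl(\|\theta_0\|_{L^\infty}, \|\nabla\theta_0\|_{L^\infty}, L_0\bigr),
\end{align*}
whose right-hand side does not depend on $T$. As $T > 0$ was arbitrary, this is a global-in-time a priori bound on $\|\nabla\theta\|_{L^\infty}$.

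Finally I would close by standard continuation and uniqueness. The uniform control of $\|\nabla\theta\|_{L^\infty}$ feeds a Beale--Kato--Majda type continuation criterion: once $\nabla\theta$ stays bounded the drift $u = \RSZ^\perp\theta$ is log-Lipschitz-controlled, and commutator estimates give $\frac{d}{dt}\|\theta\|_{H^s}^2 \lesssim (1 + \|\nabla\theta\|_{L^\infty})\|\theta\|_{H^s}^2$, so every Sobolev norm grows at most exponentially and the local smooth solution extends to all of $[0,\infty)$. Uniqueness in this class follows from a Gronwall estimate on the $L^2$ norm of the difference of two solutions, using the bound on the drift.

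I expect the only genuinely delicate point in this assembly to be the bookkeeping of the two thresholds together with the uniformity in $T$: one must fix $\delta_0$ from Theorem~\ref{thm:step1} \emph{first} and only then verify the $OSS_{\delta_0/8}$ hypothesis required by Theorem~\ref{thm:step2}, and one must use (as stressed in the discussion following Theorem~\ref{thm:step1}) that the gradient bound is uniform in $T$ rather than degenerating as $T\to\infty$. The analytic substance has already been carried out in the proofs of those two theorems, so the present argument is essentially a combination of them with classical local well-posedness.
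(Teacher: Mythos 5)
Your proposal is correct and follows essentially the same route as the paper: fix $\delta_0$ from Theorem~\ref{thm:step1} first, observe that uniform continuity of $\theta_0$ gives the $OSS_{\delta_0/8}$ property, apply Theorem~\ref{thm:step2} and then Theorem~\ref{thm:step1} on an arbitrary $[0,T]$ to obtain the $T$-independent gradient bound, and conclude by standard bootstrapping. The extra detail you supply on the continuation and uniqueness step is consistent with what the paper leaves as a routine remark.
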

\begin{proof}[Proof of Theorem~\ref{thm:SQG:critical}]	
Fix $\delta_0$ as in the statement of Theorem~\ref{thm:step1}, and pick an arbitrary $T>0$. There exists a bounded weak solution $\theta(x,t)$ on $[0,T]$. Since $\theta_0$ is in particular uniformly continuous, it automatically has the $OSS_{\delta_0/8}$ property. By Theorem~\ref{thm:step2}, it follows that $\theta(x,t)$ has the uniform $OSS_{\delta_0}$ property on $[0,T]$, and hence by Theorem~\ref{thm:step1} we have that $\theta \in L^\infty([0,T];W^{1,\infty}(\RR^d))$. This is sub-critical information which may be used to bootstrap and show that $\theta \in C^\infty( (0,T) \times \RR^d)$. From the proof it is clear that in fact we only need the initial data to lie in $W^{1,\infty}(\RR^d)$ and have sufficient decay at spacial infinity.
\end{proof}


\begin{remark}[\bf Conditional regularity for the super-critical SQG]
By combining the proof of Theorem~\ref{thm:lowerbound:scalar} and that of Theorem~\ref{thm:lowerbound:extended}, one may show that if $\theta \in L^\infty(0,T;C^\delta)$, then
\begin{align*}
\nabla \theta  (x,t) \cdot \Lambda^\alpha \nabla \theta(x,t) \geq \frac{1}{2}\Lambda^\alpha |\nabla \theta(x,t)|^2 + c_1 \frac{ |\nabla \theta(x,t)|^{2+\frac{\alpha}{1-\delta}} }{ M^{\frac{\alpha}{1-\delta}}}
\end{align*}
where $M = \|\theta\|_{L^\infty_t C^\delta_x}$. Therefore,
\begin{align} 
\frac{1}{2} ( \partial_t + u \cdot \nabla + \Lambda^{\alpha}) |\nabla \theta(x,t)|^2 + c_1\frac{ |\nabla \theta(x,t)|^{2+\frac{\alpha}{1-\delta}} }{ M^{\frac{\alpha}{1-\delta}}} + \frac{D(x,t)}{2} \leq |\nabla u(x,t)| \; |\nabla \theta(x,t)|^2 \label{eq:supercrit:1}
\end{align}
with $D$ as defined by \eqref{eq:DDD:def}. To bound $|\nabla u(x,t)|$ we split in two pieces, according to $\rho >0$. The inner piece is bounded by $c_2 D^{1/2} \rho^{\alpha/2}$, while the outer piece is bounded by $c_2 M/ \rho^{1-\delta}$. The Schwartz inequality and optimizing in $\rho$ gives
\begin{align} 
|\nabla \theta(x,t)|^2 |\nabla u(x,t)| \leq \frac{D(x,t)}{2} + c_4 M^{\frac{\alpha}{1-\delta+\alpha}} |\nabla \theta(x,t)|^{4 - \frac{2\alpha}{1-\delta+\alpha}}. \label{eq:supercrit:2}
\end{align}
Therefore, it follows from \eqref{eq:supercrit:1} and \eqref{eq:supercrit:2} that if
\begin{align*}
2 + \frac{\alpha}{1-\delta} > 4 - \frac{2\alpha}{1-\delta+\alpha} \Leftrightarrow \frac{\alpha^2}{(1-\delta)^2} + \frac{\alpha}{1-\delta} > 2 \Leftrightarrow \delta > 1-\alpha
\end{align*}
then the maximum of $|\nabla \theta|$ can not exceed a certain constant which depends on $M$, showing that $\nabla \theta \in L^\infty(0,T; L^\infty(\RR^d))$ and hence $\theta$ is a regular solution on $(0,T)$. This recovers the results of \cite{CW08}, without making use of the Besov-space techniques.
\end{remark}

\section{Critical Burgers in \texorpdfstring{$d$}{d} dimensions} 
\label{sec:Burgers}

Returning to the example from the Introduction, we can use the exact same strategy to prove global existence of smooth solutions for critical Burgers equations. We consider
$$
\pa_t\theta + \fr{1}{2}|\nabla\theta |^2 + \Lambda \theta =0 
$$
differentiate and obtain
\bn
\pa_t u + u\cdot\nabla u  + \Lambda u = 0
\la{ub}
\end{align}
for $u =\nabla\theta$. It is easy to show that $\|u\|_{L^{\infty}}$ is non-increasing as long as solutions are smooth. We have:
\begin{theorem}[{\bf Global regularity for critical $d-D$ Burgers}]\label{thm:burgers}
Assume $u_0 \in \Schwartz(\RR^d)$. The Cauchy problem for the Burgers equation \eqref{ub} is globally well posed in the smooth category.
\end{theorem}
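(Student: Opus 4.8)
The plan is to transplant the two-step scheme of Theorems~\ref{thm:step1} and~\ref{thm:step2} from critical SQG onto the velocity equation \eqref{ub}, with $u=\nabla\theta$ playing the role that $\theta$ played for SQG. The conserved substitute for the $L^\infty$ maximum principle is now $\|u\|_{L^\infty}=\|\nabla\theta\|_{L^\infty}$, which is non-increasing exactly as in Corollary~\ref{cor:lowerbound:scalar}: at a maximum of $|u|^2$ the transport term drops and $u\cdot\Lambda u\ge\frac12\Lambda|u|^2\ge0$, so $\partial_t\max|u|^2\le0$. Accordingly I would say that $u$ has only small shocks if $\sup_{|x-y|<L}|u(x,t)-u(y,t)|\le\delta$, and prove two statements: (Step 1) if $u$ has the uniform $OSS_{\delta_0}$ property for a suitable dimensional $\delta_0$, then $\|\nabla u\|_{L^\infty}=\|\nabla^2\theta\|_{L^\infty}$ stays bounded, which is sub-critical and bootstraps to smoothness; (Step 2) small shocks of $u_0$ persist. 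Since $u_0\in\Schwartz$ is uniformly continuous it has $OSS_{\delta_0/8}$, so the two steps combine to give Theorem~\ref{thm:burgers}.

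For Step 1 I would differentiate \eqref{ub}, write $w=\nabla u=\nabla^2\theta$ (a symmetric matrix), and form the evolution of $|w|^2$, obtaining at a maximum $\bar x$ of $|w|^2$ the relation $\frac12\partial_t|w|^2+\mathrm{tr}(w^3)+\sum_i\nabla u_i\cdot\Lambda\nabla u_i=0$, the transport vanishing at $\bar x$. The only obstacle is the local cubic term, bounded by $|\mathrm{tr}(w^3)|\le|w|^3$; this is genuinely unlike SQG, where the bad term was a singular integral of $\nabla\theta$ controlled directly by the OSS of $\theta$. Here the OSS of $u$ must instead feed the dissipation. The identity \eqref{eq:pointwise} summed over components gives $\sum_i\nabla u_i\cdot\Lambda\nabla u_i\ge\frac12\Lambda|w|^2+\frac12 D_w\ge\frac12 D_w$, and I would re-run the lower bound \eqref{eq:gradient:lower:2} for $D_w$, splitting at radius $R$ and using the $OSS_{\delta_0}$ hypothesis (rather than $\|u\|_{L^\infty}$) to control the cross term on $R\le|y|\le L$. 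The optimal $R\sim\delta_0/|w|$ lies below $L$ precisely in the dangerous large-$|w|$ regime, yielding $D_w\gtrsim c_1^2|w|^3/(c_2\delta_0)-c_2\|u\|_{L^\infty}|w|/L^2$. Since the bad coefficient is the universal constant $1$, choosing $\delta_0$ small (depending only on the dimension) makes the boosted dissipation exceed $2|w|^3$, so $\frac12\partial_t|w|^2\le-|w|^3+c\,\|u\|_{L^\infty}|w|/L^2$ and $\max|w|$ cannot exceed a constant depending on $\|u\|_{L^\infty}$ and $L$; the rigorous version uses the convex-function/$L^{2p}$ argument of Theorem~\ref{thm:step1}.

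For Step 2 I would mirror Theorem~\ref{thm:step2} with $\delta_h u(x,t)=u(x+h,t)-u(x,t)$. Differencing \eqref{ub} gives the exact analogue of \eqref{eqd}, namely $(\partial_t+u\cdot\nabla_x+(\delta_h u)\cdot\nabla_h+\Lambda)\delta_h u=0$, and testing against $(\delta_h u)\Phi(h)$ with $\Phi=e^{-\Psi(|h|)}$ reproduces \eqref{base}--\eqref{ineqone}, i.e. $Lv+\Phi D_h\le\Psi'|\delta_h u|\,v$ for $v=|\delta_h u|^2\Phi$. The decisive simplification over SQG is that the advecting field $\delta_h u$ is now bounded outright, $|\delta_h u|\le2\|u\|_{L^\infty}$, so the logarithmic splitting \eqref{dhub}--\eqref{eq:log:inequality} is unnecessary and the right-hand side is simply $2\|u\|_{L^\infty}\Psi'v$. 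Using the lower bound $\frac12\Phi D_h\ge c_7 v^{3/2}\Phi^{-1/2}/(|h|\|u\|_{L^\infty})$, obtained as in \eqref{dhlowb}--\eqref{condi} (the finite difference $\delta_h u$ again supplying the $|h|/r^2$ gain in the outer integral once $r\ge3|h|$), I find $Lv\le0$ whenever $v\ge\delta_0^2/16$, provided only that $\Psi'(y)\le q/y$ with $q\sim\delta_0/\|u\|_{L^\infty}^2$. Since $q/y$ is non-integrable at both ends, I take $\Psi(y)\approx q\log(y/l)$ cut off smoothly near $0$, exactly as in \eqref{psiprimeformula}--\eqref{psiformula}; choosing $l$ small (depending on the initial shock length) forces $\|v_0\|_{L^\infty}<\delta_0^2/16$, and the weak maximum principle for $L$ applied to $\varphi(v)$ propagates this to all $t\le T$.

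The hard part is Step 1. Unlike SQG, the nonlinearity is the purely local cubic $\mathrm{tr}(w^3)$, so the key realization is that the small-shock hypothesis on $u$ enters not by taming a velocity gradient but by boosting the constant in the nonlinear lower bound from $1/\|u\|_{L^\infty}$ (which only closes for small data, as in the Introduction) to the tunable $1/\delta_0$. Everything else, in particular the $\Psi$-ODE of Step 2, is strictly simpler than in critical SQG because $\|\nabla\theta\|_{L^\infty}$ is itself the controlled quantity.
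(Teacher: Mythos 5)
Your proposal is correct and follows essentially the same route as the paper: the paper likewise runs the two-step OSS scheme on $u=\nabla\theta$, uses the $OSS_{\delta_0}$ hypothesis to boost the lower bound on the dissipation to $c_4 g^3/\delta_0$ (minus a harmless tail term) so that it dominates the local cubic term for a purely dimensional $\delta_0$, and observes that the stability step simplifies to the non-logarithmic ODE $\Psi'\le q/y$ because $\delta_h u$ is bounded outright. Your identification of where the OSS hypothesis must enter (into the dissipation lower bound rather than into the nonlinearity) is exactly the point the paper singles out as the main difference from SQG.
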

We only give a sketch of the proof, since almost all arguments can be carried over from the SQG case. Letting 
$$
g =|\nabla u|
$$
we have that
\bn
\left(\pa_t +u \cdot\nabla  +\Lambda \right) g^2 + D + 2g^3 =0
\la{gbu}
\end{align}
with 
$$
D(x,t)=c_0 P.V. \int_{\RR^d}\fr{\left |\nabla u(x)-\nabla u(y)\right |^2}{|x-y|^{d+1}}dy.
$$ 
We assume the property $OSS_{\delta_0}$ for $u$. The main difference between Burgers and SQG is that in \eqref{gbu} we have $g^3$ instead of $g^2 \RSZ u$, so that we to use the $OSS$ property in the lower bound for the positive term $D$, rather than in bounding $g^3$. 
We (softly) split the integral expression for $D$ according to  $\rho$ and $\delta_0$, and then optimize in $\rho$ to obtain
\bn
D(x,t) &\ge c_0 \int_{|x-y|\geq \rho} \fr{\left |\nabla u(x)-\nabla u(y)\right |^2}{|x-y|^{d+1}}dy
\notag\\
&\ge c_1 \fr{g^2}{\rho} - c_2 g \int_{\rho \leq |x-y| \leq L} \frac{|u(x)-u(y)|}{|x-y|^{d+2}} dy - 2 c_2 g \|u\|_{L^\infty} \int_{ |x-y| > L} \frac{1}{|x-y|^{d+2}} dy \notag\\
&\ge c_1 \fr{g^2}{\rho} - c_3 \fr{g \delta_0}{\rho^2} - c_3 \fr{ g \|u\|_{L^2}}{L^2} \notag\\
&\ge c_4\fr{g^3}{\delta_0} - c_4\fr{g\|u_0\|_{L^{\infty}}}{L^2}
\la{dbu}
\end{align}
where $L$ is the length scale of the property $OSS_{\delta_0}$. It is enough to have $\delta_0$ sufficiently small, e.g. less than $c_4/2$, to deduce that $g$ is bounded. This proves that the small shocks property implies regularity.

The proof of stability of $OSS$ is similar (and simpler) than the proof for SQG. There we had to work in order to bound $|\delta_h u|$ in terms of $|\delta_h \theta|$, but as above here there is no need to do this. Thus, the ODE inequality for $\Psi'$ (corresponding to \eqref{basipsi} above) does not involve logarithms, and is simply $\Psi'\le\fr{q}{y}$. This function is clearly also not integrable around $y=0$, and so the argument given in \eqref{F}--\eqref{final} can be carried through. We obtain thus global existence of smooth solutions for critical Burgers equations in $\RR^d$.

\section{A nonlocal anti-symmetric perturbation of the Euler equations} \label{sec:Euler}
The Euler equations are the classical model for the motion of an ideal incompressible fluid. These equations give rise to some of the most challenging problems in mathematical fluid dynamics.
See, for instance, the survey articles \cite{BT,C2,C3}, the books \cite{Ch,MB}, and references therein for a review of the subject.
When it comes to the issue of global existence and regularity of solutions to the Euler (and Navier-Stokes) equations, the problem is much better understood in the two-dimensional case.
The main reason is that in two dimensions the vorticity stretching term is absent, allowing one to obtain a global in time maximum principle for the vorticity. This maximum principle is the key ingredient in the proofs of global existence of smooth solutions to the Euler equations (coupled with the Br\'ezis-Gallou\"et or more precisely the Beale-Kato-Majda inequality, see for instance~\cite{BKM,KT}).

Here we would like to point out that  current methods for understanding the Euler equations, even in two dimensions, are not robust with respect to very mild perturbations in the equation. In this direction we consider the following example of a two-dimensional Euler equation with  solution dependent forcing that is {\em linear, nonlocal,} and {\em anti-symmetric}:
\begin{align}
&\partial_{t} u  + u\cdot \nabla u + \nabla p = A\; \RSZ_{1} u \label{eq:u1}\\
& \nabla \cdot u = 0\label{eq:u2}
\end{align}
where $\RSZ_1 = \partial_{1} \Lambda^{-1}$ is the Riesz transform with Fourier symbol $-i \xi_1/|\xi|$, and $A>0$ is some given amplitude of the perturbation. Here $(x,t) \in \RR^{2} \times [0,\infty)$. The immediate difficulty arising in the analysis of global smooth solutions to the initial value problem associated to \eqref{eq:u1}--\eqref{eq:u2} is the lack of an {a priori} control of the $L^{\infty}$ norm of the vorticity $\omega = \nabla^{\perp} \cdot u = \partial_{1} u_{2} - \partial_{2} u_{1}$, due to the unboundedness of the Riesz transform in $L^{\infty}$. Indeed, from the vorticity equation associated to \eqref{eq:u1}--\eqref{eq:u2}, i.e.
\begin{align}
&\partial_t \omega + u \cdot \nabla \omega = A\; \RSZ_1 \omega \label{eq:w1}\\
& \omega = \nabla^\perp \cdot u, \nabla \cdot u = 0 \label{eq:w2}
\end{align}
we only obtain global in time bounds on the  $L^{p}$ norms of $\omega$, with $1<p<\infty$. Alternatively, if one were able to control $\Vert \omega(\cdot,t) \Vert_{BMO}$, globally in time, the global regularity of \eqref{eq:u1}--\eqref{eq:u2} would also follow (cf.~\cite{KT}).

We mention that recently the first author and collaborators have analyzed in \cite{CCW} the so-called {\em Loglog-Euler} equation,  i.e. the active scalar equation
\begin{align}
&\partial_{t} \theta + v \cdot \nabla \theta = 0 \label{eq:LogLogEuler:1}\\
&v = \nabla^{\perp} \Lambda^{-1} P(\Lambda) \theta \label{eq:LogLogEuler:2}
\end{align}
where $P(\Lambda)$ is a Fourier multiplier with symbol $$P(|\xi|) = \left( \log( 1+ \log(1 + |\xi|^{2})) \right)^{\gamma},$$ and $0 \leq \gamma \leq 1$.
If one regards $\theta$ as the vorticity, the difference between the system \eqref{eq:LogLogEuler:1}--\eqref{eq:LogLogEuler:2} and the classical Euler equations is that the  velocity $v$ is logarithmically more singular. To obtain the global regularity of smooth solution to \eqref{eq:LogLogEuler:1}--\eqref{eq:LogLogEuler:2}, with even more singular velocities, i.e. for functions $P$ that grow faster than $\log \log |\xi|$ as $|\xi| \rightarrow \infty$, is an open problem. We remark that the features which make  the systems \eqref{eq:w1}--\eqref{eq:w2} and \eqref{eq:LogLogEuler:1}--\eqref{eq:LogLogEuler:2} more difficult than the classical Euler equations are of different nature. For the Loglog-Euler system the $L^{\infty}$ maximum principle is available and the difficulty arrives from the borderline nature of the logarithmic estimate of $\Vert \nabla v \Vert_{L^{\infty}}$ in terms of $\Vert \theta \Vert_{L^{\infty}}$. For the system \eqref{eq:w1}--\eqref{eq:w2} estimating $\Vert \nabla u \Vert_{L^{\infty}}$ in terms of $\Vert \omega \Vert_{L^{\infty}}$ is done exactly the same as for the Euler equations, but we are lacking the a priori control on $\Vert  \omega \Vert_{L^\infty}$.

The motivation for addressing linear, nonlocal,  anti-symmetric perturbations of the Euler equations is quite basic: consider the solution-dependent forced equations
\begin{align}
&\partial_t u + u \cdot \nabla u + \nabla p = f(u),\  \nabla \cdot u = 0 \label{eq:forcedEuler}
\end{align}
where $f=(f_{1},f_{2})$, and $f_{1},f_{2} \colon \RR^{2} \rightarrow \RR$ are smooth and bounded functions. Such an equation may arise naturally for example if the Euler equations are coupled with another quantity that is transported by $u$, or for instance in the study of the stochastic Euler equations with multiplicative noise. In order to address the global in time regularity of \eqref{eq:forcedEuler}, one classically analyzes the equation solved by the vorticity $\omega = \nabla^{\perp} \cdot u$, namely
\begin{align}
\partial_{t} \omega + u \cdot \nabla \omega = - \partial_{1} f_{1} \omega - (\partial_{1} f_{2} + \partial_{2} f_{1}) \RSZ_{12} \omega+ (\partial_{2}f_{2} - \partial_{1}f_{1}) \RSZ_{11} \omega \label{eq:forcedEulerVorticity}
\end{align}
where $\RSZ_{ij}$ are iterated Riesz transforms $\partial_{i} \partial_{j} \Lambda^{-2} $, and we have used the two-dimensional Biot-Savart law $u = ( -\partial_{2} \Lambda^{-2} \omega, \partial_{1} \Lambda^{-2} \omega).$ While the first term on the  right side of of \eqref{eq:forcedEulerVorticity} is harmless for $L^{\infty}$ estimates on $\omega$, unless $f$ is such that $\partial_{1} f_{2} + \partial_{2} f_{1}  = \partial_{2} f_{2}  - \partial_{1} f_{1} = 0$ identically, the remaining two terms are both of the type $\nabla f \; \RSZ_{ij} \omega$, i.e. a bounded  smooth function multiplied by a Calder\'on-Zygmund operator acting on $\omega$. This prevents one from obtaining an $L^{\infty}$ maximum principle for $\omega$ using classical methods. Therefore it is natural to simplify the right side of \eqref{eq:forcedEulerVorticity}, and have it contain just a constant multiple of one Calder\'on-Zygmund operator acting on $\omega$, which for simplicity we take to be $\RSZ_{1} = \partial_{1} \Lambda^{-1}$ yielding \eqref{eq:w1}--\eqref{eq:w2}.

The principal result of this section is to prove that if one regularizes the system \eqref{eq:w1}--\eqref{eq:w2} by introducing a very { mildly} dissipative operator $\LL$, one may a priori obtain the global in time control of the $L^\infty$ norm of the vorticity, and hence the resulting equations are globally well-posed in the smooth category.  More precisely, we consider the  system
\begin{align}
&\partial_t \omega + \LL \omega +  u \cdot \nabla \omega = A \RSZ_1 \omega \label{eq1}\\
& \omega = \nabla^\perp \cdot u,\   \nabla \cdot u = 0,\label{eq2}
\end{align}
on $\RR^{2} \times [0,\infty)$, where $A>0$ is the amplitude of the perturbation and the dissipative operator $\LL$ is defined via
\begin{align}
\LL \omega(x) = P.V. \int_{\RR^{2}} \frac{\omega(x) - \omega(x-y)}{|y|^{2} m(|y|)} \; dy \label{eq:L}.
\end{align}
The smooth, non-decreasing function $m \colon [0,\infty) \rightarrow [0,\infty)$, is taken to satisfy
\begin{align}
&\int_{0}^{1} \frac{m(r)}{r} dr < \infty \label{eq:L:cond:1}
\end{align}
and for simplicity also assume that $m$ satisfies the doubling condition
\begin{align}
m(2r) < c\; m(r) \label{eq:L:cond:2}
\end{align}
for some universal constant $c>0$, and for all $r>0$.
The classical examples of an operators $\LL$ satisfying \eqref{eq:L}--\eqref{eq:L:cond:2} are the fractional powers of the Laplacian
\begin{align*}
\Lambda^\alpha \omega(x)  = c_{\alpha} P.V. \int_{\RR^{2}} \frac{\omega(x) - \omega(x-y)}{|y|^{2+\alpha}} \; dy 
\end{align*}
for $\alpha \in (0,1)$, so that $m(r) = r^{\alpha}/c_{\alpha}$, where $c_{\alpha}$ is a normalizing constant. However one may consider dissipative operators that are {\em weaker} than any power of the fractional Laplacian. For instance one may consider an operator $\LL$ defined via \eqref{eq:L}, with $m(r)$ an increasing positive function that behaves like $1/( - \ln r)^{1+\epsilon}$ for all sufficiently small $r$, and some $\epsilon>0$. Condition \eqref{eq:L:cond:1} says that at sufficiently small scales the dissipative operator $\LL$ is stronger than the forcing $A\, \RSZ_{1}$. 

The main result of this section is:
\begin{theorem}[\bf Global regularity]\label{thm:main}
Let $\LL$ be a dissipative operator defined by \eqref{eq:L}, with $m$ satisfying \eqref{eq:L:cond:1}--\eqref{eq:L:cond:2}, and let the initial data $\omega(\cdot,0) = \omega_{0}$ be smooth, e.g. in $H^{s}$, with $s>1$. Then the initial value problem associated to \eqref{eq1}--\eqref{eq2}  has a unique global in time solution $\omega \in C(0,\infty;H^{s})$.
\end{theorem}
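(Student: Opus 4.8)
The plan is to reduce global regularity to a single global-in-time a priori bound on $\|\omega(\cdot,t)\|_{L^\infty}$, since the remainder follows from the standard theory for transport equations with a Calder\'on--Zygmund velocity. First I would record local well-posedness in $H^s$, $s>1$: equations \eqref{eq1}--\eqref{eq2} form a transport equation (with divergence-free $u=\nabla^\perp\Lambda^{-2}\omega$) perturbed by the order-zero operator $A\RSZ_1$ and damped by the positive operator $\LL$, so a routine energy estimate yields a local solution together with a Beale--Kato--Majda-type continuation criterion: the solution persists as long as $\|\omega\|_{H^s}$ (equivalently $\int_0^T\|\nabla u\|_{L^\infty}\,dt$) stays finite. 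It therefore suffices to bound $\|\omega\|_{H^s}$ on every finite interval $[0,T]$.

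Two a priori bounds feed this. The first is the $L^2$ bound: testing \eqref{eq1} against $\omega$, the transport term drops by incompressibility, $\langle\omega,\LL\omega\rangle\ge0$, and crucially the forcing vanishes, $\langle\omega,\RSZ_1\omega\rangle=0$, because $\RSZ_1$ is \emph{anti-symmetric} (its symbol $-i\xi_1/|\xi|$ is odd and imaginary); hence $\|\omega(\cdot,t)\|_{L^2}\le\|\omega_0\|_{L^2}$ for all $t$. The second, and the heart of the matter, is the $L^\infty$ bound, obtained by a pointwise comparison at the maximum in the spirit of Section~\ref{sec:max}. Let $\bar x=\bar x(t)$ be a point where $\omega(\cdot,t)$ attains its maximum. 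There $\nabla\omega(\bar x)=0$, so the transport term drops, and $\LL\omega(\bar x)\ge0$ with the explicit expression \eqref{eq:L}. Writing the forcing through the odd Riesz kernel $\sim y_1/|y|^3$ and symmetrizing (the principal value against a constant vanishes) gives $|\RSZ_1\omega(\bar x)|\le c\int_{\RR^2}\frac{\omega(\bar x)-\omega(\bar x-y)}{|y|^2}\,dy$, where at the maximum every increment $\omega(\bar x)-\omega(\bar x-y)$ is nonnegative. I would then split this integral at a scale $\delta>0$: on $|y|\le\delta$, monotonicity of $m$ gives $\frac{1}{|y|^2}\le\frac{m(\delta)}{|y|^2 m(|y|)}$, so the inner part is at most $c\,m(\delta)\,\LL\omega(\bar x)$; on $|y|>\delta$, Cauchy--Schwarz and the $L^2$ bound give at most $c\|\omega_0\|_{L^2}/\delta$.

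Condition \eqref{eq:L:cond:1} forces $m(0^+)=0$ (otherwise $\int_0^1 m(r)/r\,dr$ diverges), so I can fix $\delta=\delta(A)$ with $Ac\,m(\delta)\le\tfrac12$, absorbing the inner forcing into half of the dissipation. Reading the equation at $\bar x$ then yields $\frac{d}{dt}\max_x\omega\le-\tfrac12\LL\omega(\bar x)+cA\|\omega_0\|_{L^2}/\delta\le cA\|\omega_0\|_{L^2}/\delta$, and the same estimate for $-\omega$ (which solves the same system) controls the minimum; hence $\|\omega(\cdot,t)\|_{L^\infty}\le\|\omega_0\|_{L^\infty}+C\,t$ on every $[0,T]$. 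This formal evaluation at the maximum is made rigorous exactly as in the proof of Theorem~\ref{thm:step1}, by composing with a convex cut-off $\varphi$ and using that $\LL+u\cdot\nabla$ obeys a weak maximum principle; the doubling hypothesis \eqref{eq:L:cond:2} enters only as a mild regularity assumption on $m$ used to handle comparable scales.

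Finally I would propagate $H^s$ regularity. In the $H^s$ energy estimate the forcing again vanishes by anti-symmetry, the dissipation is nonnegative, and the transport commutator is controlled by the Kato--Ponce bound $c\|\nabla u\|_{L^\infty}\|\omega\|_{H^s}^2$. Using the logarithmic inequality $\|\nabla u\|_{L^\infty}\le c\|\omega\|_{L^\infty}\log(e+\|\omega\|_{H^s})+c\|\omega\|_{L^2}$ together with the just-established bounds on $\|\omega\|_{L^\infty}$ and $\|\omega\|_{L^2}$, an Osgood/log-Gr\"onwall argument keeps $\|\omega\|_{H^s}$ finite on $[0,T]$, and the continuation criterion upgrades this to a global solution; uniqueness follows from a standard $L^2$ estimate on the difference of two solutions. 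The main obstacle is clearly the $L^\infty$ step: everything hinges on absorbing the singular near-field of the anti-symmetric Riesz forcing into the dissipation at the maximum, which is possible precisely because \eqref{eq:L:cond:1} makes $\LL$ stronger than $A\RSZ_1$ at small scales.
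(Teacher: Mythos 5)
Your overall architecture (local well-posedness in $H^s$, a Beale--Kato--Majda continuation criterion in which the anti-symmetric forcing and the positive operator $\LL$ drop out of $L^2$-based estimates, and reduction of everything to an a priori $L^\infty$ bound on $\omega$) is exactly the paper's. Where you genuinely diverge is in the key $L^\infty$ step. The paper's Theorem~\ref{thm:noblowup} works with $|\omega|^2$ and the quadratic dissipation term $D$: it splits the Riesz integral with the weight $\sqrt{m(|y|)}$, bounds the near field by $\tfrac{D}{4}+c_4A^2|\omega|^2$ with $c_4\sim\int_0^1 m(r)r^{-1}\,dr$ (this is where \eqref{eq:L:cond:1} is used), and then beats the $A^2|\omega|^2$ term with the logarithmic lower bound $D\gtrsim |\omega|^2\ln(1/\rho)$, yielding a \emph{time-independent} bound $M$. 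You instead absorb the near field of $\RSZ_1\omega$ \emph{linearly} into the dissipation at the maximum, using only that $m(\delta)\to 0$ as $\delta\to 0^+$; this is precisely the alternative argument the paper sketches in Remark~\ref{rem:weak:m}, it works under the strictly weaker hypothesis $\lim_{r\to0^+}m(r)=0$, and it yields the weaker bound $\|\omega(\cdot,t)\|_{L^\infty}\le\|\omega_0\|_{L^\infty}+Ct$ --- which is still amply sufficient for global regularity on finite intervals. So your route trades uniformity in time for generality in $m$; both close the argument.

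Two caveats. First, once you majorize the odd kernel $y_1/|y|^3$ by the even kernel $|y|^{-2}$, the far-field integral $\int_{|y|>\delta}(\omega(\bar x)-\omega(\bar x-y))|y|^{-2}\,dy$ diverges in $\RR^2$ (the constant $\omega(\bar x)$ no longer cancels). You must keep the odd kernel in the region $|y|>\delta$, use $\int_{|y|>\delta}y_1|y|^{-3}\,dy=0$ to kill the constant term, and only then apply Cauchy--Schwarz to get $c\|\omega_0\|_{L^2}/\delta$; you clearly know this cancellation is available, but the majorization should be performed only on $|y|\le\delta$. Second, the rigorization is slightly more delicate than you suggest: the convex cut-off $\varphi$ in the proof of Theorem~\ref{thm:step1} applies verbatim to inequalities of the form $(\partial_t+u\cdot\nabla+\LL)f\le 0$ above a threshold, whereas your inequality at the maximum is $\partial_t\max\omega\le C$ with $C>0$; the paper itself flags this issue in Remark~\ref{rem:weak:m} (a maximum point need not exist, and one must work with cut-off functions and lower-order errors) and omits the details. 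Neither point affects the viability of your approach.
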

In fact, the global regularity of smooth solutions to \eqref{eq1}--\eqref{eq2} still holds if the condition \eqref{eq:L:cond:1} is weakened to only assume that $\lim_{r\to 0+} m(r) = 0$ (cf.~Remark~\ref{rem:weak:m} below).  The proof of Theorem~\ref{thm:main} is based on classical Sobolev energy estimates, the Beale-Kato-Majda inequality (cf.~\cite{BKM}), and establishing the a priori control of the  $L^{\infty}$ norm of $\omega$. Obtaining a suitable bound on $\Vert \omega \Vert_{L^{\infty}}$ is the main difficulty, and in this direction we have the  following global in time estimate:
\begin{theorem}[\bf Global $L^\infty$ control]\label{thm:noblowup}
Let $\omega_{0} \in H^{s}$ for some $s>1$, and let $\LL$ be defined via \eqref{eq:L}--\eqref{eq:L:cond:2}. There exists a positive constant $M = M(A,m,\Vert \omega_{0} \Vert_{L^{\infty}}) $ such that if $\omega$ is a $H^{s}
$-smooth solution of the initial value problem associated to \eqref{eq1}--\eqref{eq2} on $[0,T)$, then  we have
\begin{align*}
\Vert \omega(\cdot,t) \Vert_{L^{\infty}} \leq   M
\end{align*}
for all $t\in [0,T)$.
\end{theorem}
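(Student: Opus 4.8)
The plan is to run a nonlinear maximum principle directly on $\omega$ at a point where its maximum is attained, exploiting that the dissipation $\LL$ dominates the forcing $A\,\RSZ_1$ at small scales --- which is exactly the content of the integrability condition \eqref{eq:L:cond:1}. By the symmetry $\omega\mapsto -\omega$, $u\mapsto -u$ (which preserves \eqref{eq1}--\eqref{eq2}, since $u$ is linear in $\omega$), it suffices to bound $\max_x\omega(\cdot,t)$; applying the same argument to $-\omega$ controls $-\min_x\omega$, and together they bound $\Vert\omega\Vert_{L^\infty}$. Since $\omega_0\in H^s$ with $s>1$, the solution decays at infinity and the maximum is attained at some $\bar x=\bar x(t)$. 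As a preliminary a priori estimate I would record that $L^2$ is non-increasing: testing \eqref{eq1} against $\omega$ and using the anti-symmetry of $\RSZ_1$ (so $\int \omega\,\RSZ_1\omega=0$) together with the positivity $\int\omega\,\LL\omega\ge 0$ gives $\Vert\omega(\cdot,t)\Vert_{L^2}\le\Vert\omega_0\Vert_{L^2}$ for all $t$.

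At the maximum $\bar x$ one has $\nabla\omega(\bar x)=0$, hence $u\cdot\nabla\omega(\bar x)=0$, so formally $\tfrac{d}{dt}\max_x\omega=-\LL\omega(\bar x)+A\,\RSZ_1\omega(\bar x)$ (this is made rigorous by the convex $\varphi$ / $L^{2p}\to L^\infty$ device used in the proof of Theorem~\ref{thm:step1}). The heart of the argument is the \emph{near-field domination} of the forcing by the dissipation. Splitting both operators at a scale $R>0$, writing $\RSZ_1\omega(\bar x)=I_{in}+I_{out}$ with $I_{in}$ the contribution of $|y|\le R$, and subtracting the odd-kernel constant $\int_{|y|\le R}\tfrac{y_1}{|y|^3}\,dy=0$, the sign $\omega(\bar x)-\omega(\bar x-y)\ge 0$ at the maximum yields
\[
|I_{in}|\le c\int_{|y|\le R}\frac{\omega(\bar x)-\omega(\bar x-y)}{|y|^2}\,dy,
\]
while, since $m$ is non-decreasing and every term of the $\LL$ integrand is non-negative at $\bar x$,
\[
\LL\omega(\bar x)\ge \int_{|y|\le R}\frac{\omega(\bar x)-\omega(\bar x-y)}{|y|^2 m(|y|)}\,dy \ge \frac{1}{m(R)}\int_{|y|\le R}\frac{\omega(\bar x)-\omega(\bar x-y)}{|y|^2}\,dy.
\]
Comparing gives $|I_{in}|\le c\,m(R)\,\LL\omega(\bar x)$. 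Because \eqref{eq:L:cond:1} forces $m(0^+)=0$, I fix $R_0=R_0(A,m)$ so small that $A c\, m(R_0)\le 1/2$, absorbing the singular part of the forcing into half of the dissipation and leaving $\tfrac{d}{dt}\max_x\omega\le -\tfrac12\LL\omega(\bar x)+A\,|I_{out}|$.

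It then remains to estimate the two surviving terms at the fixed scale $R_0$. For the far field of the forcing, absolute convergence and Cauchy--Schwarz give $|I_{out}|\le c\,R_0^{-1}\Vert\omega\Vert_{L^2}\le c\,R_0^{-1}\Vert\omega_0\Vert_{L^2}$, a constant. For the dissipation I extract a linear lower bound from the annulus $R_0\le|y|\le 2R_0$: the doubling condition \eqref{eq:L:cond:2} gives $|y|^2 m(|y|)\le 4c\,R_0^2 m(R_0)$ there, so computing the measure of the annulus and bounding $\int \omega(\bar x-y)\,dy$ by $\Vert\omega_0\Vert_{L^2}$ via Cauchy--Schwarz yields $\LL\omega(\bar x)\ge a\,\omega(\bar x)-b$, with $a=a(m,R_0)>0$ and $b=b(m,R_0)\Vert\omega_0\Vert_{L^2}$. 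Combining, $\tfrac{d}{dt}\max_x\omega\le -\tfrac a2\max_x\omega+K$ with $K=K(A,m,\Vert\omega_0\Vert_{L^2})$, a closed scalar differential inequality whose solution stays below $M:=\max\{\Vert\omega_0\Vert_{L^\infty},\,2K/a\}$ for all time, uniformly in $T$. The step I expect to be the main obstacle --- and the one carrying the real idea --- is the near-field domination $|I_{in}|\le c\,m(R)\,\LL\omega(\bar x)$, since this is precisely where the scale-dependent strength of $\LL$ encoded in \eqref{eq:L:cond:1} is turned into control of the Calder\'on--Zygmund forcing; the far-field bounds and the concluding differential inequality are comparatively routine, as is the rigorous treatment of the moving maximum. (Strictly, the resulting $M$ also depends on the conserved quantity $\Vert\omega_0\Vert_{L^2}$, which is consistent with the stated dependence once one notes that $\Vert\omega\Vert_{L^2}$ is controlled by the data for all time.)
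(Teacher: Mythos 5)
Your argument is correct, but it is a genuinely different proof from the one the paper gives for Theorem~\ref{thm:noblowup}; it is, in essence, a completed and strengthened version of the alternative sketched in Remark~\ref{rem:weak:m}. The paper works at the level of $|\omega|^2$: it extracts the quadratic dissipation form $D$ from the pointwise identity $\omega\LL\omega=\tfrac12\LL(|\omega|^2)+\tfrac12 D$, controls the near field of $A\,\RSZ_1\omega$ by Cauchy--Schwarz against $\sqrt{D}$ --- which is exactly where the integrability $\int_0^1 m(r)r^{-1}dr<\infty$ of \eqref{eq:L:cond:1} is used in an essential way (cf.~\eqref{eq:Euler:5}) --- and then absorbs the resulting $c_4A^2|\omega|^2$ term via a logarithmic lower bound on $D$ over an annulus (cf.~\eqref{eq:Euler:7}). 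You instead work with $\omega$ itself at its extremum and dominate the near field of the Riesz transform \emph{pointwise} by $c\,m(R)\,\LL\omega(\bar x)$, using the sign of $\omega(\bar x)-\omega(\bar x-y)$ at the maximum together with the monotonicity of $m$; this consumes only $m(0^+)=0$, which is the strictly weaker hypothesis of Remark~\ref{rem:weak:m}, and your annulus lower bound $\LL\omega(\bar x)\ge a\,\omega(\bar x)-b$ (valid since every term of the integrand is nonnegative at $\bar x$, so restricting to the annulus and using the doubling condition \eqref{eq:L:cond:2} loses only constants) upgrades the linearly growing bound $\|\omega_0\|_{L^\infty}+ct\|\omega_0\|_{L^2}$ of the remark to the time-uniform bound the theorem asserts. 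Two small points: the substitution $\omega\mapsto-\omega$, $u\mapsto-u$ flips the sign of the advection term relative to the others in \eqref{eq1}, so it is not literally a symmetry of the equation --- but this is harmless, since the advection vanishes at the extremum and the entire argument runs verbatim at the minimum with all inequalities reversed; and, as you note, $M$ also depends on $\|\omega_0\|_{L^2}$, which is controlled by Remark~\ref{rem:energy:enstrophy} and appears in the paper's own constant as well. In short, the paper's route showcases its nonlinear-maximum-principle machinery for $|\nabla f|^2$-type quantities, while yours is more elementary and applies under a weaker dissipation hypothesis.
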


Before we turn to the proof of Theorem~\ref{thm:noblowup}, we point out that the system \eqref{eq1}--\eqref{eq2} is conservative.

\begin{remark}[\bf Energy and enstrophy conservation] \label{rem:energy:enstrophy}
We note that $\int_{\RR^{2}} u \RSZ_{1} u \; dx = 0$ since $\RSZ_{1}$ is given by an odd Fourier symbol. Hence if we multiply the dissipative version of \eqref{eq:u1} by $u$ and integrate by parts we obtain that $\partial_{t} \int_{\RR^{2}} |u|^{2}(x,t)\; dx  = 0$,
 if $u$ is smooth enough. Therefore,  the energy $\Vert u \Vert_{L^{2}}^{2}$ is non-increasing for smooth solutions to \eqref{eq1}. Similarly, if  one multiplies \eqref{eq1} by $\omega$ and integrates by parts, one obtains that for smooth solutions the enstrophy $\Vert \omega \Vert_{L^{2}}^{2}$ is also non-increasing.
\end{remark}

\begin{proof}[Proof of Theorem~\ref{thm:noblowup}]
Multiplying \eqref{eq1} by $\omega(x)$ we obtain
\begin{align*}
 \frac{1}{2} ( \partial_{t} + u \cdot \nabla ) |\omega(x,t)|^{2} + \omega(x,t) \LL\omega(x,t) = A \omega(x,t) \RSZ_{1} \omega(x,t) 
\end{align*}
and using the pointwise identity (which may be proven the same as \eqref{eq:pointwise})
\begin{align*} 
\omega(x,t) \LL\omega(x,t) = \frac{1}{2} \LL ( |\omega(x,t)|^{2} ) + \frac{D(x,t)}{2}  
\end{align*}
where as usual
\begin{align} 
D(x,t) = P.V. \int_{\RR^{2}} \frac{ (\omega(x,t) - \omega(x-y,t))^{2}}{|y|^{2} m(|y|)} dy \label{eq:Euler:3}
\end{align}
we obtain
\begin{align} 
\frac{1}{2} ( \partial_{t} + u \cdot \nabla + \LL) |\omega(x,t)|^{2} + \frac{D(x,t)}{2}  = A \omega(x,t) \RSZ_{1} \omega(x,t) \label{eq:Euler:4}.
\end{align}
In order to bound the right side of \eqref{eq:Euler:4} we split softly in the integral representation of the Riesz transform, and use the Cauchy-Schwartz inequality to obtain
\begin{align} 
A \omega(x,t) \RSZ_{1} \omega(x,t) &= c_{0} A \omega(x,t) P.V. \int_{\RR^{2}} ( \omega(x) - \omega(x-y) ) \frac{y_{1}}{|y|^{3}} dy \notag\\
&\leq c_0 A |\omega(x,t)| P.V. \int_{|y|\leq 1} \frac{ |\omega(x) - \omega(x-y)| }{|y| \sqrt{m(|y|)}} \cdot \frac{|y_1| \sqrt{m(|y|)}}{|y|^2} dy  \notag\\
& \qquad \qquad + c_1 A |\omega(x,t)| \int_{|y|>1} \frac{|\omega(x-y)|}{|y|^2} dy \notag\\
&\leq c_2 A |\omega(x,t)| \sqrt{D(x,t)} \left(\int_0^1 \frac{m(r)}{r} dr \right)^{1/2} + c_3 A |\omega(x,t)| \|\omega(\cdot,t)\|_{L^2} \notag\\
&\leq \frac{D(x,t)}{4} + c_4 A^2 |\omega(x,t)|^2+  c_3 A |\omega(x,t)| \|\omega_0\|_{L^2}.\label{eq:Euler:5}
\end{align}
Here $c_4$ depends linearly on $\int_0^1 \frac{m(r)}{r} dr$, a quantity assumed to be finite in \eqref{eq:L:cond:1}. Thus, \eqref{eq:Euler:4}--\eqref{eq:Euler:5} give
\begin{align} 
\frac{1}{2} ( \partial_{t} + u \cdot \nabla + \LL) |\omega(x,t)|^{2} + \frac{D(x,t)}{4} \leq c_4 A^2 |\omega(x,t)|^2+  c_3 A |\omega(x,t)| \|\omega_0\|_{L^2} \label{eq:Euler:6}
\end{align}
for a universal constant $c_3>0$, and a constant $c_4>0$ which may depend on $m$. To bound $D(x,t)$ from below, we let $\rho>0$ and since $m$ is increasing estimate
\begin{align} 
D(x,t) &\geq \int_{|y|\geq \rho}  \frac{ (\omega(x,t) - \omega(x-y,t))^{2}}{|y|^{2} m(|y|)} dy \notag\\
&\geq |\omega(x,t)|^2 \int_{\rho \leq |y| \leq 1} \frac{dy}{|y|^{2} m(|y|)}  - 2 |\omega(x,t)| \int_{|y|\geq \rho} \frac{ | \omega(x-y,t)|}{|y|^{2} m(|y|)} dy  \notag\\
&\geq \frac{c_5 }{m(1)}|\omega(x,t)|^2\ln \frac{1}{\rho} - c_6 |\omega(x,t)| \|\omega_0\|_{L^2}  \frac{1}{\rho m(\rho)} \label{eq:Euler:7}.
\end{align}
We could now optimize in $\rho$ and obtain a positive lower bound for $D$, but in fact there is no need to do that. We simply pick $\rho= \rho(A,m) \in (0,1)$ to be such that
$$
\frac{c_5}{8 m(1)} \ln \frac{1}{\rho} > c_4 A^2.
$$
For this fixed $\rho$ from \eqref{eq:Euler:6}--\eqref{eq:Euler:7} we obtain
\begin{align} 
\frac{1}{2} ( \partial_{t} + u \cdot \nabla + \LL) |\omega(x,t)|^{2} + c_6 |\omega(x,t)|^2 \leq  c_7 |\omega(x,t)| \|\omega_0\|_{L^2} \label{eq:Euler:8}
\end{align}
for some positive constants $c_6,c_7$ which may depend on $\rho$, $m$, and $A$. The a priori estimate \eqref{eq:Euler:8} shows that $(\partial_t + u \cdot \nabla + \LL) |\omega|^2 \leq 0$ whenever $|\omega(x,t)| \geq c_7 \|\omega_0\|_{L^2}/c_6$. Again, if the maximum of $|\omega(x)|$ were attained at some point $\bar x$, since $\nabla |\omega(\bar x)|^2 =0$, and $\LL \omega(\bar x) \geq 0$ we would formally obtain from \eqref{eq:Euler:8} that $\partial_t |\omega(\bar x)|^2 \leq 0$ whenever $|\omega(\bar x)|$ is too large, showing that the $L^\infty$ norm of $\omega$ can never exceed a certain value. As in the proof of Theorem~\ref{thm:step1},  in order to make this argument rigorous, we introduce a non-decreasing convex smooth function $\varphi(r)$ which is identically $0$  on $ 0 \leq r \leq \max\{ \|\omega_0\|_{L^\infty}^2 , c_7^2 \|\omega_0\|_{L^2}^2/ c_6^2 \}$, and positive otherwise.  Multiplying \eqref{eq:Euler:8} by $\varphi'(|\omega(x,t)|^2)$ then gives
\begin{align}
(\partial_t + u \cdot \nabla + \LL) \varphi( |\omega(x,t)|^2) \leq 0 \label{eq:Euler:9}
\end{align}
for all $x$ and all $t\in [0,T)$. It is not hard to verify (as in \cite{CC}) that $\LL$ is positive on $L^p$, i.e. 
$$
\int_{\RR^2} |f(x)|^{p-2} f(x) \LL f(x) dx \geq 0
$$
for all smooth functions $f$, and all $p$ even. Hence from \eqref{eq:Euler:9} we may obtain the weak maximum principle
$$
\| \varphi( |\omega(\cdot,t)|^2) \|_{L^\infty} \leq \| \varphi( |\omega_0|^2) \|_{L^\infty} = 0 
$$ 
due to our choice of $\varphi$. This shows that $\| \omega(\cdot,t)\| \leq M = \max\{ \| \omega_0\|_{L^\infty}, c_7 \|\omega_0\|_{L^2}/c_6\}$ for all $t \in [0,T)$, concluding the proof of the theorem.
\end{proof}

\begin{remark}[\bf Global well-posedness with arbitrarily weak dissipation]\label{rem:weak:m}
Using an argument inspired by the work \cite{KN1} for the critically dissipative dispersive SQG equation, one may obtain the global $L^{\infty}$ bound stated in Theorem~\ref{thm:noblowup} under {\em weaker} conditions on $m$, namely if \eqref{eq:L:cond:1} is replaced by 
\begin{align}
\lim_{r\to 0 +} m(r) = 0.\label{eq:L:cond:3}
\end{align}
The main ideas is as follows. If we were to assume that for some fixed time $t$, the $\sup_{x\in \RR^{2}} \omega(x,t)$ is attained at $\omega(\bar x)$, then at $\bar x$ the advective term in \eqref{eq1} vanishes and we are left with
\begin{align*}
\partial_{t} \omega(\bar x) = \int_{\RR^{2}} \frac{\omega(\bar x - y) - \omega(\bar x)}{|y|^{2}} \left( \frac{1}{m(|y|)} + \frac{A y_{1}}{2\pi |y|} \right) \; dy
\end{align*}
in the principal value sense. The smallness of $m(r)$ with respect to $1/A$ as $r\rightarrow 0+$ implies that there exists a small enough $\rho>0$ so that when restricted to $y\in B_{\rho}(0)$, the integral on the right side of \eqref{eq:L:cond:3} is negative. The contribution from the exterior of the ball $B_{\rho}$ is proportional to $\Vert \omega(\cdot,t) \Vert_{L^{2}} \leq \Vert \omega_{0} \Vert_{L^{2}}$. Therefore, at maxima of $\omega$ we have the bound
$$
\partial_t \omega(\bar x,t) \leq c \|\omega_0\|_{L^2}.
$$
A similar argument applied to the minimum would then show that $\| \omega(\cdot,t)\|_{L^\infty}$ may be  bounded as $\|\omega_0\|_{L^\infty} + c t \|\omega_0\|_{L^2}$ for all $t \in [0,T)$. However, since such a point $\bar x$ where the maximum (or minimum) is attained may not exist, we need to look at the evolution \eqref{eq1} when multiplied by a smooth cut-off function, leaving us to estimating lower order terms. We omit further details.
\end{remark}

\begin{proof}[Proof of Theorem~\ref{thm:main}]
The local existence of smooth solutions  $u \in L^{\infty}_{t}H^{s}_{x}$, with $s>2$,  for the velocity equations \eqref{eq:u1}--\eqref{eq:u2} follows straightforward from the energy inequality
\begin{align}
\frac{1}{2} \frac{d}{dt} \Vert u \Vert_{H^{s}}^{2} \leq c_s \Vert \nabla u \Vert_{L^{\infty}} \Vert u \Vert_{H^{s}}^{2} \label{eq:energy}
\end{align}
and the Sobolev embedding theorem. Estimate \eqref{eq:energy} is proven the same as for the Euler equations, since $\int_{\RR^{2}} \partial^{\alpha} \RSZ_{1} u \cdot \partial^{\alpha} u \; dx = 0$ for all $\alpha \in {\mathbb N}^{2}$, and hence the term $\RSZ_{1} u$ is absent in $L^{2}$-based estimates.

To obtain the global existence of smooth solutions to \eqref{eq:u1}--\eqref{eq:u2}, the standard procedure is to bound the term $\Vert \nabla u \Vert_{L^{\infty}}$ with $\Vert \omega \Vert_{L^{\infty}}$ and a Sobolev extrapolation inequality with logarithmic correction (see, for instance~\cite{BKM,KT}). 
Since the term $\RSZ_{1} u$ vanishes in $H^{s}$ energy estimates, it is not hard to check that the following blow-up criterion may be obtained directly from \eqref{eq:energy} and the Beale-Kato-Majda inequality:
$$
\mbox{If } \lim_{t \nearrow T} \int_{0}^{t} \Vert \omega(\cdot,s) \Vert_{L^{\infty}}\; ds < \infty, \mbox{ then the smooth solution may be continued past } T.
$$
In addition, one may prove using similar arguments to \cite{CC} that  $\LL$ is positive on in $L^p$ estimates with $p$ even. It hence directly follows that the above blow-up criterion, which is proven for the  non-dissipative equations, also hold for the dissipative system \eqref{eq1}--\eqref{eq2}, and hence the global regularity of smooth solutions holds due to the a priori bound on $\|\omega(\cdot,t)\|$ obtained in Theorem~\ref{thm:noblowup}.
\end{proof}

Lastly, we point out that one may obtain a direct time-independent a priori estimate on  $\Vert \nabla \omega \Vert_{L^{\infty}}$, for smooth solutions $\omega$ of \eqref{eq1}--\eqref{eq2}. For simplicity we give a sketch the proof that $\Vert \nabla \omega(\cdot,t) \Vert_{L^{\infty}}$ is bounded as $t \to \infty$ if the dissipative operator $\LL$ is a fractional power of the Laplacian.

\begin{proposition}[\bf Uniform $W^{1,\infty}$ control]\label{cor:Lip}
Let $\omega \in C((0,T);H^{s})$ be a solution of the initial value problem associated to \eqref{eq1}--\eqref{eq2}, with  $\LL =\Lambda^\alpha$ for some $\alpha \in (0,2)$. Then we have
\begin{align*}
\Vert \nabla \omega (t) \Vert_{L^{\infty}} \leq M
\end{align*}
for all $t \in [0,T)$, where  $M = M( A, \alpha, \Vert \omega_{0} \Vert_{L^{2}}, \Vert \omega_{0} \Vert_{W^{1,\infty}})$, is a suitable constant.
\end{proposition}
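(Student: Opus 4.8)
The plan is to differentiate \eqref{eq1} and run the nonlinear maximum principle of Theorem~\ref{thm:step1} one derivative higher, now using the uniform bound $\Vert \omega(\cdot,t)\Vert_{L^\infty}\le M$ furnished by Theorem~\ref{thm:noblowup} in the role played by the conserved $L^\infty$ norm in the SQG argument. Fixing $k$, setting $g=\partial_k\omega$, applying $\partial_k$ to \eqref{eq1}, multiplying by $\partial_k\omega$ and summing over $k$, the pointwise identity \eqref{eq:pointwise} (applied to $\nabla\omega$, cf.~Theorem~\ref{thm:lowerbound:scalar}) yields
\begin{align*}
\frac{1}{2}\left(\partial_t+u\cdot\nabla+\Lambda^\alpha\right)|\nabla\omega|^2+\frac{D(x,t)}{2}+(\partial_k u_j)(\partial_j\omega)(\partial_k\omega)=A\,\nabla\omega\cdot\RSZ_1\nabla\omega,
\end{align*}
where $D(x,t)=c_\alpha\int_{\RR^2}|\nabla\omega(x,t)-\nabla\omega(y,t)|^2|x-y|^{-2-\alpha}\,dy$ is the dissipation of \eqref{eq:DDD:def}. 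By Theorem~\ref{thm:lowerbound:scalar} together with $\Vert\omega\Vert_{L^\infty}\le M$, a quarter of $D$ dominates the nonlinear term, $\tfrac14 D(x,t)\ge c_1|\nabla\omega(x,t)|^{2+\alpha}M^{-\alpha}$, so that after moving the stretching term to the right,
\begin{align*}
\frac{1}{2}\left(\partial_t+u\cdot\nabla+\Lambda^\alpha\right)|\nabla\omega|^2+\frac{c_1|\nabla\omega|^{2+\alpha}}{M^\alpha}+\frac{D}{4}\le |\nabla u|\,|\nabla\omega|^2+A\,|\nabla\omega|\,|\RSZ_1\nabla\omega|.
\end{align*}
It then remains to show that, at points where $|\nabla\omega|$ is large, both terms on the right are dominated by $c_1|\nabla\omega|^{2+\alpha}M^{-\alpha}+\tfrac14 D$.

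For the forcing I would use the soft-splitting of Theorem~\ref{thm:step1}. Since $\RSZ_1$ has an odd, mean-zero kernel I may subtract $\nabla\omega(x)$ and split at a radius $\rho$: Cauchy--Schwarz bounds the inner piece by $c\sqrt{D}\,\rho^{\alpha/2}$, while integrating by parts bounds the outer piece by $cM/\rho+c\Vert\omega_0\Vert_{L^2}$ (using $\Vert\omega\Vert_{L^\infty}\le M$ and $\Vert\omega\Vert_{L^2}\le\Vert\omega_0\Vert_{L^2}$). Young's inequality absorbs the inner term into $\tfrac18 D$, and optimizing in $\rho$ the remaining contribution scales like $|\nabla\omega|^{(2+\alpha)/(1+\alpha)}$, modulo lower-order terms. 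Since $\tfrac{2+\alpha}{1+\alpha}<2+\alpha$ for all $\alpha\in(0,2)$, this is \emph{strictly subcritical} relative to the dissipation and is therefore absorbed once $|\nabla\omega|$ exceeds a threshold depending on $A,\alpha,M,\Vert\omega_0\Vert_{L^2}$.

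The stretching term is the main obstacle. Writing $\nabla u=\RSZ\omega$ as a local term plus a mean-zero Calder\'on--Zygmund integral and splitting at a radius $r$, the Taylor bound $|\omega(x-y)-\omega(x)|\le|y|\,\Vert\nabla\omega\Vert_{L^\infty}$ controls the inner piece by $c\,r\,\Vert\nabla\omega\Vert_{L^\infty}$, the intermediate annulus by $cM\log(1/r)$, and the far field by $c\Vert\omega_0\Vert_{L^2}$, giving
\begin{align*}
|\nabla u(x,t)|\le cM+c\,r\,\Vert\nabla\omega\Vert_{L^\infty}+cM\log\tfrac1r+c\Vert\omega_0\Vert_{L^2}.
\end{align*}
Evaluating this at a maximum of $|\nabla\omega|^2$, where $\Vert\nabla\omega\Vert_{L^\infty}=|\nabla\omega(\bar x)|$, and choosing $r\in(0,1]$ so that $c\,r\,|\nabla\omega|^3$ is absorbed into half of $c_1|\nabla\omega|^{2+\alpha}M^{-\alpha}$ forces $r\sim |\nabla\omega|^{\alpha-1}M^{-\alpha}$, whence $\log(1/r)\sim\log|\nabla\omega|$. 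The surviving stretching contribution is then $\sim M\,|\nabla\omega|^2\log|\nabla\omega|$, which is dominated by $|\nabla\omega|^{2+\alpha}M^{-\alpha}$ precisely because $|\nabla\omega|^{\alpha}/\log|\nabla\omega|\to\infty$ for every $\alpha>0$. The delicate point is thus the interplay between the borderline (order-zero) logarithmic bound on $\nabla u$ and the strength of the nonlinear dissipation; keeping careful track of the two distinct nonlocal operators $\RSZ_1$ and $\Lambda^\alpha$ is what makes the bookkeeping, rather than any single estimate, the real work.

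Combining the three bounds yields a pointwise inequality of the form $(\partial_t+u\cdot\nabla+\Lambda^\alpha)|\nabla\omega|^2\le 0$ whenever $|\nabla\omega(x,t)|\ge M_\ast$, for a threshold $M_\ast=M_\ast(A,\alpha,\Vert\omega_0\Vert_{L^2},\Vert\omega_0\Vert_{W^{1,\infty}})$. To make this rigorous without assuming the maximum is attained, I would argue exactly as in Theorem~\ref{thm:step1}: introduce a convex, non-decreasing $\varphi$ vanishing on $[0,\max\{\Vert\nabla\omega_0\Vert_{L^\infty}^2,M_\ast^2\}]$, multiply by $\varphi'(|\nabla\omega|^2)$, use the convexity inequality for $\Lambda^\alpha$ and the positivity of $\LL=\Lambda^\alpha$ in even $L^p$ estimates to obtain the weak maximum principle $\Vert\varphi(|\nabla\omega(\cdot,t)|^2)\Vert_{L^\infty}\le\Vert\varphi(|\nabla\omega_0|^2)\Vert_{L^\infty}=0$. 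This gives $\Vert\nabla\omega(\cdot,t)\Vert_{L^\infty}\le\max\{\Vert\nabla\omega_0\Vert_{L^\infty},M_\ast\}=:M$ uniformly in $t\in[0,T)$, as claimed.
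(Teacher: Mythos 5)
Your proposal is correct and its skeleton coincides with the paper's: differentiate \eqref{eq1}, use the pointwise identity of Theorem~\ref{thm:lowerbound:scalar} together with the uniform bound $\Vert\omega\Vert_{L^\infty}\leq M$ from Theorem~\ref{thm:noblowup} to extract the nonlinear term $c_1|\nabla\omega|^{2+\alpha}M^{-\alpha}$, absorb the dispersive forcing $A\,\nabla\omega\cdot\RSZ_1\nabla\omega$ by a soft $\rho$-splitting plus Cauchy--Schwarz and optimization (your exponent $(2+\alpha)/(1+\alpha)$ matches \eqref{eq:cor:Lip:2} exactly), and close at a point of maximum because $|\nabla\omega|^{2+\alpha}$ beats everything on the right. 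The one place where you genuinely diverge is the stretching term: the paper controls $\Vert\nabla u\Vert_{L^\infty}$ via the Kozono--Taniuchi inequality \eqref{eq:KT}, bounding $\Vert\nabla u\Vert_{BMO}$ by $\Vert\omega\Vert_{L^\infty}$ and paying a $\log_+$ of a $W^{s,p}$ norm interpolated between $L^2$ and $W^{1,\infty}$, whereas you bound $\nabla u=\RSZ\RSZ^\perp\omega$ pointwise by the classical three-zone kernel splitting, $|\nabla u|\leq cM+c\,r\,\Vert\nabla\omega\Vert_{L^\infty}+cM\log\frac1r+c\Vert\omega_0\Vert_{L^2}$, choosing $r$ so the Lipschitz inner piece is swallowed by half the nonlinear dissipation. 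Both routes land on the same borderline quantity $M\,|\nabla\omega|^2\log_+|\nabla\omega|$, which the dissipation $|\nabla\omega|^{2+\alpha}M^{-\alpha}$ dominates for any $\alpha>0$; your version is more elementary and self-contained (no BMO, no extrapolation inequality), and is more in the spirit of the soft-splitting arguments used elsewhere in the paper, while the paper's version isolates the $\log$ loss into a single quotable harmonic-analysis lemma. Your final regularization via the convex function $\varphi$ is exactly the device the paper indicates for making the maximum-point argument rigorous.
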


\begin{proof}[Proof of Proposition~\ref{cor:Lip}]
Taking the gradient of \eqref{eq1} and taking an inner product with $\nabla \omega$ we obtain the pointwise bound (we omit time dependence)
\begin{align}
\frac{1}{2} ( \partial_{t} + u \cdot \nabla  +  \Lambda^\alpha) |\nabla \omega(x)|^2 + \frac{D(x)}{2} = A \RSZ_1 \nabla \omega(x) \cdot \nabla \omega(x) + \nabla u(x) \colon \nabla \omega(x) \cdot \nabla \omega(x). \label{eq:cor:Lip:1}
\end{align}
where as before
\begin{align*}
D(x) = \frac{c_{2,\alpha}}{2} P.V. \int_{\RR^2} \frac{ |\nabla \omega(x) - \nabla \omega(x+y)|^2}{|y|^{2+\alpha}} dy.
\end{align*}
We use half of the dissipation $D$ to obtain the nonlinear lower bound as in Theorem~\ref{thm:lowerbound:scalar}, while the other half is used to dampen the effect of the dispersive forcing. Decomposing the singular integral which defines $\RSZ_{1}$ into an inner piece and an outer piece according to the parameter $\rho>0$, similarly to the proof of Theorem~\ref{thm:step1} we obtain
\begin{align*}
A | \nabla \omega(x) \cdot \RSZ_{1} \nabla \omega (x) | \leq c_{1}  | \nabla \omega (x)| \sqrt{D(x)} \rho^{\alpha/2} + c_{1}  |\nabla \omega (x)| \frac{\| \omega \|_{L^{\infty}}}{\rho}
\end{align*} 
for some constant $c_{1} = c_{1}(A)>0$. After applying the Cauchy-Schwartz inequality and optimizing in $\rho$, the above estimate gives
\begin{align} 
A | \nabla \omega(x) \cdot \RSZ_{1} \nabla \omega (x) | \leq \frac{D(x)}{2} + c_{2} |\nabla \omega(x)|^{\frac{2+\alpha}{1+\alpha}} \| \omega\|_{L^{\infty}}^{\frac{\alpha}{1+\alpha}}. \label{eq:cor:Lip:2}
\end{align}
In order to estimate $\|\nabla u\|_{L^{\infty}}$,  we recall the bound (cf.~\cite{KT})
\begin{align}
\Vert f \Vert_{L^{\infty}} \leq c_3 \left( 1+ \Vert f \Vert_{BMO} (1 + \log_{+} \Vert f \Vert_{W^{s,p}} ) \right) \label{eq:KT}
\end{align}
which holds for $s> 2/p$, $1<p<\infty$, and some sufficiently large constant $c_3$. Letting $p$ be sufficiently large and $s \in (0,1)$, one may interpolate $W^{s,p}$ between $L^{2}$ and $W^{1,\infty}$ and then apply \eqref{eq:KT} to $f = \nabla u$. Since $\nabla u$ is a matrix of Riesz transforms acting on $\omega$, we have $\Vert \nabla u \Vert_{BMO} \leq C \Vert \omega \Vert_{L^{\infty}}$ and therefore
\begin{align}
\| \nabla u \|_{L^\infty} \leq c_4 \left(1 + \Vert \omega  \Vert_{L^{\infty}} (1 + \log_{+} ( \Vert \omega \Vert_{L^{2} \cap L^{\infty}} + \|\nabla \omega\|_{L^{\infty}}  ) ) \right) \label{eq:cor:Lip:3}.
\end{align}

Assume there exists $\bar x$,  a point where the maximum of $|\nabla \omega|^2$ is attained (this can be made rigorous using cut-off functions or convex change of variables). Evaluating \eqref{eq:cor:Lip:1} at $\bar x$, using \eqref{eq:cor:Lip:2}, \eqref{eq:cor:Lip:3}, and the lower bound on $D(x)$ given by Theorem~\ref{thm:lowerbound:scalar}, we obtain the a priori estimate
\begin{align}
\partial_{t} | \nabla \omega (\bar x)|^2 + c_5 \frac{ \Vert \nabla \omega \Vert_{L^{\infty}}^{2+\alpha} }{M^{\alpha} } \leq c_4 M \left( 1 + \log_{+}( M + \Vert \nabla \omega \Vert_{L^{\infty}} )  \right) \Vert \nabla \omega \Vert_{L^{\infty}}^2  + c_2 M \| \nabla \omega\|_{L^\infty}^{\frac{2+\alpha}{1+\alpha}} \label{eq:P}
\end{align}
where $M = M (\|\omega_0\|_{L^2}, \|\omega\|_{L^\infty_t L^\infty_x}$), which we knows is finite  cf.~Theorem~\ref{thm:noblowup}. Lastly, since on the left side of \eqref{eq:P} we have $\Vert \nabla \omega \Vert_{L^{\infty}}^{2+\alpha}$, while on the right side of \eqref{eq:P} we have the slower growing quantities $\Vert\nabla \omega \Vert_{L^{\infty}}^2 \log_{+} \Vert \nabla \omega \Vert_{L^{\infty}}$ and $\| \nabla \omega\|_{L^\infty}^{\frac{2+\alpha}{1+\alpha}}$, we obtain that $\partial_t |\nabla \omega (\bar x)|^2 \leq 0$, whenever $|\nabla \omega (\bar x)| = \|\nabla \omega\|_{L^\infty}$ is larger than a certain constant. Therefore  $\|\nabla \omega(\cdot,t) \|_{L^\infty}$ can never exceed this constant, for $t>0$.
\end{proof}

\section{Applications to the  2D Boussinesq equation with mixed fractional dissipation} \label{sec:Boussinesq}

The two-dimensional Boussinesq equations with mixed fractional dissipation of order $\alpha$ and $\beta$, denoted in the following as \BB{\alpha}{\beta}, is given by
\begin{align} 
& \partial_{t} u + u \cdot \nabla u + \nabla p + \Lambda^\alpha u = \theta e_{2} \label{eq:B:1}\\
& \nabla \cdot u = 0 \label{eq:B:2}\\
&\partial_{t} \theta + u \cdot \nabla \theta + \Lambda^\beta \theta = 0 \label{eq:B:3}
\end{align}
where $e_{2} = (0,1)$, and $\alpha,\beta \in [0,2]$. We make the convention that by $\alpha=0$ we mean that there is no dissipation in \eqref{eq:B:1}, and similarly $\beta =0$ means that there is no dissipation in \eqref{eq:B:3}.

The global well-posedness of smooth solutions to \BB{0}{0} is an outstanding open problem in fluid dynamics. Partial results have been obtained only in the presence of dissipation. The well-posedness of \BB{0}{2} and \BB{2}{0} have been obtained in \cite{Chae,HL}, while the scaling critical cases \BB{1}{0} and \BB{0}{1} have been resolved in \cite{HKR1} and \cite{HKR2} respectively. We also point out that the case of partial anisotropic dissipation has been considered in several settings cf.~\cite{CaoWu,DP,LLT} and references therein.

In this section we consider \BB{\alpha}{\beta}, with {\em both} $\alpha,\beta \in (0,1)$, and using the nonlinear maximum principles proven in Section~\ref{sec:max}, we prove the global regularity of smooth solutions under a certain condition between the powers of the fluid and transport dissipations. To the best of our knowledge the case when {both} $\alpha$ and $\beta$ are less than $1$ has not been previously addressed.  Our main result is:
\begin{theorem}[\bf Global well-posedness for \BB{\alpha}{\beta}] \label{thm:Boussinesq}
 Assume that $\theta_0$ and $u_0$ are sufficiently smooth and that $\nabla \cdot u_0 = 0$.  If  $\beta >2/(2+\alpha)$, then the Cauchy problem for the \BB{\alpha}{\beta} equations \eqref{eq:B:1}--\eqref{eq:B:3} has a unique global in time smooth solution $(u,\theta)$.
\end{theorem}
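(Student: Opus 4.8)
The plan is to reduce global existence to an a priori bound and then to close a \emph{coupled} pair of differential inequalities manufactured by the nonlinear maximum principles of Section~\ref{sec:max}, with the threshold $\beta>2/(2+\alpha)$ emerging exactly from the balance of the two dissipative gains. First I would record the controlled quantities. From the temperature equation \eqref{eq:B:3} one has $\|\theta(\cdot,t)\|_{L^p}\le\|\theta_0\|_{L^p}$ for every $p\in[1,\infty]$, and an $L^2$ energy estimate on \eqref{eq:B:1} (the pressure drops by incompressibility, and $\int \theta\, u_2$ is handled by Cauchy--Schwarz together with $\|\theta\|_{L^2}\le\|\theta_0\|_{L^2}$) gives at most linear-in-time growth of $\|u(\cdot,t)\|_{L^2}$. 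Local existence in $H^s$ and a Beale--Kato--Majda continuation criterion---the solution persists as long as $\int_0^T(\|\nabla u\|_{L^\infty}+\|\nabla\theta\|_{L^\infty})\,dt<\infty$---are standard, so everything reduces to bounding $\|\nabla\theta\|_{L^\infty}$ and $\|\omega\|_{L^\infty}$ on each finite interval, where $\omega=\pa_1 u_2-\pa_2 u_1$ solves $\pa_t\omega+u\cdot\nabla\omega+\Lambda^\alpha\omega=\pa_1\theta$ (there is no vortex stretching in 2D, so the only inhomogeneity is the forcing $\pa_1\theta$).

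For the temperature gradient I would differentiate \eqref{eq:B:3}, contract with $\nabla\theta$, and read the result at a maximum of $|\nabla\theta|^2$, made rigorous with a convex $\varphi$ exactly as in the proof of Theorem~\ref{thm:step1}. Corollary~\ref{cor:lowerbound:scalar} with exponent $\beta$ and $\|\theta\|_{L^\infty}\le\|\theta_0\|_{L^\infty}$ yields
\bn
\pa_t\|\nabla\theta\|_{L^\infty}+c\,\fr{\|\nabla\theta\|_{L^\infty}^{1+\beta}}{\|\theta_0\|_{L^\infty}^\beta}\le\|\nabla u\|_{L^\infty}\,\|\nabla\theta\|_{L^\infty}.\notag
\en
For the vorticity the decisive observation is that the nonlinear lower bound of Theorem~\ref{thm:lowerbound:extended} applies to $\omega$ itself: since $\omega$ is a sum of first derivatives of the components of $u$, the very integration by parts used in that proof transfers the derivative onto the kernel and produces a bound in terms of $\|u\|_{L^p}$. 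Taking $p=2$, the only a priori controlled norm of $u$, gives at a maximum of $|\omega|^2$
\bn
\pa_t\|\omega\|_{L^\infty}+c\,\fr{\|\omega\|_{L^\infty}^{1+\alpha/2}}{\|u\|_{L^2}^{\alpha/2}}\le\|\nabla\theta\|_{L^\infty}.\notag
\en

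It then remains to close these two inequalities. Writing $X=\|\nabla\theta\|_{L^\infty}$, $Y=\|\omega\|_{L^\infty}$, and using the logarithmic Calder\'on--Zygmund bound $\|\nabla u\|_{L^\infty}\le c(1+Y(1+\log_+(\|\omega\|_{H^s}+\|\nabla\omega\|_{L^\infty})))$ as in \eqref{eq:KT}, the second displayed inequality shows, by a barrier argument, that $Y$ can never much exceed its nullcline: $Y(t)\le\max\{Y_0,\ C_T(\sup_{[0,t]}X)^{2/(2+\alpha)}\}$, with $C_T$ depending on $\sup_{[0,T]}\|u\|_{L^2}$. Feeding this into the $X$-inequality and inspecting $X$ at a running maximum, the forcing is of order $X\cdot Y\lesssim X^{\,1+2/(2+\alpha)}$ while the dissipative gain is of order $X^{1+\beta}$; the strict inequality
\bn
1+\beta>1+\fr{2}{2+\alpha}\iff\beta>\fr{2}{2+\alpha}\notag
\en
forces $\pa_t X\le0$ once $X$ is large, giving an a priori bound for $X$, and hence for $Y$, on every $[0,T]$. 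Uniqueness then follows from a routine energy estimate for the difference of two solutions at the attained regularity. \emph{The main obstacle} is precisely this closure. First, one must check that the $L^2$ nonlinear lower bound is genuinely available for $\omega$ (this is what pins $p=2$ and thereby produces the sharp exponent $2/(2+\alpha)$; a higher $p$ would improve the condition but is not a priori controlled). Second, one must absorb the logarithmic factor carrying the higher Sobolev norm without circularity, which I would handle through the Beale--Kato--Majda/Osgood mechanism---the polynomial gap between $X^{1+\beta}$ and $X^{1+2/(2+\alpha)}$ left by the \emph{strict} inequality leaves ample room for the slowly growing $\log_+$ term.
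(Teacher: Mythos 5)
Your proposal is correct and follows essentially the same route as the paper: the same pair of coupled pointwise differential inequalities obtained from the nonlinear lower bounds (the $L^\infty$-based bound with gain $\Theta^{2+\beta}$ for $\nabla\theta$, and the $L^2$-based bound of Theorem~\ref{thm:lowerbound:extended} with gain $\Omega^{2+\alpha/2}/\|u\|_{L^2}^{\alpha/2}$ for $\omega$, which is exactly what pins the threshold $2/(2+\alpha)$), the same logarithmic Beale--Kato--Majda bound for $\|\nabla u\|_{L^\infty}$, and the same bootstrap in which $\Omega\lesssim \Theta^{2/(2+\alpha)}$ is fed back and the strict inequality $\beta>2/(2+\alpha)$ absorbs the logarithm. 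The one substantive step you defer --- showing that the $H^s$ norm inside the $\log_+$ grows at most polynomially in $\|\omega\|_{L^\infty}$, $\|\nabla\theta\|_{L^\infty}$, and $\|u\|_{L^2}$, so that the closure is not circular --- is precisely the content of the paper's estimate \eqref{eq:B:energy}, proved there by a page of Gagliardo--Nirenberg and commutator estimates, and you correctly identify it as the remaining work.
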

For clarity of the presentation we only give here the main ideas of the proof. These ideas may be turned into a rigorous proof using the arguments described at the end of Theorem~\ref{thm:step1}.
\begin{proof}[Proof of Theorem~\ref{thm:Boussinesq}]
Let $\omega = \nabla^\perp \cdot u$ be the vorticity associated to the velocity $u$. The evolution \eqref{eq:B:1} may be classically written in terms of the vorticity as:
\begin{align} 
& \left( \partial_{t}  + u \cdot \nabla + \Lambda^\alpha \right) \omega = \partial_1 \theta  \label{eq:B:4}.
\end{align}
It follows from \eqref{eq:B:3}--\eqref{eq:B:4} and energy estimates that even in the absence of any dissipative terms, i.e. for \BB{0}{0}, the equations are well-posed in the smooth category up to time $T$ if $\int_{0}^{T} \|\nabla \theta (\cdot,t) \|_{L^\infty} dt < \infty$.

From \eqref{eq:B:3} and the pointwise identity \eqref{eq:pointwise} we obtain that the evolution of $|\nabla \theta|^2$ is given by
\begin{align} 
& \frac{1}{2} \left( \partial_{t} + u \cdot \nabla + \Lambda^\beta\right) |\nabla \theta|^2 + \frac{D}{2}  = \nabla u \colon \nabla \theta \cdot \nabla \theta  \label{eq:B:5},
\end{align}
where cf.~\eqref{eq:DDD:def} we have
\begin{align*}
D(x)  = c_\beta P.V. \int_{\RR^2} \frac{ |\nabla \theta(x) - \nabla \theta(y)|^2 }{|x-y|^{2+\beta}} dy \end{align*}
and $c_\beta>0$ is a normalizing constant. Using the nonlinear lower bound of Theorem~\ref{thm:lowerbound:scalar}, and the $L^\infty$ maximum principle for $\theta$, we obtain that 
\begin{align} 
D(x) \geq c_1 \frac{|\nabla \theta(x)|^{2+\beta}}{\|\theta_0\|_{L^\infty}^\beta} \label{eq:B:7}
\end{align}
for some constant $c_1>0$. Thus, if we evaluate \eqref{eq:B:5} at a point $x_\theta = x_\theta(t)$ where the maximum of $|\nabla \theta(x,t)|^2$ is attained, and denoting 
\begin{align*}
\Theta(t)  = \| \nabla \theta(\cdot,t)\|_{L^\infty}
\end{align*}
we {formally} obtain from \eqref{eq:B:7} that
\begin{align} 
\partial_t |\nabla \theta(x_\theta,t)|^2 + c_1 \frac{\Theta(t)^{2+\beta}}{\|\theta_0\|_{L^\infty}^{\beta}} \leq \Theta(t)^2 \|\nabla u \|_{L^\infty}. \label{eq:B:8}
\end{align}
To obtain a similar a priori estimate for 
\begin{align*}
\Omega(t) = \| \omega(\cdot,t)\|_{L^\infty}
\end{align*}
we first observe that from \eqref{eq:B:1}--\eqref{eq:B:3} we have the energy bound
\begin{align} 
\|u(\cdot,t)\|_{L^2} \leq \| u_0\|_{L^2} + t \|\theta_0\|_{L^2} =: K(t) \label{eq:B:9}
\end{align}
which suggests that we should use the $L^2$ version of Theorem~\ref{thm:lowerbound:extended}. More precisely, we first multiply \eqref{eq:B:4} by $\omega(x)$ and then evaluate the equation at a point $\bar x = \bar x(t)$ where $|\omega(x,t)|^2$ achieves its maximum. We formally obtain from Theorems~\ref{thm:lowerbound:extended} and \ref{thm:lowerbound:scalar} that
\begin{align} 
\partial_t |\omega(\bar x,t)|^2 + c_2 \frac{\Omega(t)^{2+\frac{\alpha}{2}}}{K(t)^{\frac{\alpha}{2}}} \leq \Theta(t) \Omega(t) \label{eq:B:10}
\end{align}
Lastly, in order to couple the a priori estimates \eqref{eq:B:8} and \eqref{eq:B:10} we need a bound on $\|\nabla u \|_{L^\infty}$ in terms of $\|\omega\|_{L^\infty}$. While such a direct bound is not possible, using an extrapolation of the endpoint Sobolev inequality with logarithmic correction, and $H^s$ energy estimates with $s>1$, after some computations we may obtain
\begin{align} 
\|\nabla u(\cdot,t)\|_{L^\infty} \leq C_{0}+ C_{0}\Omega(t) +C_{0} \Omega(t) \log_{+} \left(1 + \int_{0}^{t} (1+K(\tau) + \Omega(\tau) + \Theta(\tau))^{\Gamma} d\tau \right)  \label{eq:B:energy}
\end{align}
where $C_0 = C_0 (\|\omega_0\|_{L^\infty \cap H^s}, \| \theta_0 \|_{L^\infty \cap H^s})$, for some $s\geq 3$, and $\Gamma = \Gamma(\alpha,\beta)>0$. We conclude the proof of the theorem under the assumption that \eqref{eq:B:energy} holds, and then return afterwards to give the proof of \eqref{eq:B:energy}.

 Assume by contradiction that solutions blow-up at $T>0$, and are smooth for all $t\in [0,T)$. Note that for all $t\in [0,T]$ we have $K(t) \leq K(T)$, and hence \eqref{eq:B:8}, \eqref{eq:B:10}, \eqref{eq:B:energy}, may be summarized as follows
 \begin{align} 
 &\partial_t |\nabla \theta(x_\theta,t)|^2 + C_{1} \Theta(t)^{2+\beta} \notag\\
 & \qquad \qquad \qquad \leq C_{0} \Theta(t)^2 \left(1 + \Omega(t) + \Omega(t) \log_{+} \left(1 + \int_{0}^{t} (1+K(T) + \Omega(\tau) + \Theta(\tau))^{\Gamma} d\tau \right)  \right)\label{eq:B:13} \\
&\partial_t |\omega(\bar x,t)|^2 + C_{2}  \Omega(t)^{2+\frac{\alpha}{2}} \leq \Theta(t) \Omega(t) \label{eq:B:12}
\end{align}
where $\Theta(t) = |\nabla \theta(x_\theta,t)|$ and $\Omega(t) = |\omega(\bar x,t)|$. The constants $C_{0},C_{1},C_{2}>0$ depend on various norms of the initial data, $\alpha, \beta$, and $K(T)$.
 
Let $M >0$ be a sufficiently large constant to be chosen precisely later. 
Assuming solutions blow-up at $T$, we must have that $\Theta(t)$ becomes unbounded (at least along a subsequence) as $t \rightarrow T$. Since $\Theta(t)$ is continous in time on $[0,T)$, we may define $T_{M} \in(0,T)$ to be the first time such that $\Theta(t) = M \geq 2 \Theta(0)$, that is, $[0,T_{M}]$ is the maximal time interval on which $\Theta(t) \leq M$ holds. 

Due to \eqref{eq:B:12}, on $[0,T_{M}]$ we must have
\begin{align*}
\Omega(t) \leq \max\left\{ \Omega(0) , \left( \frac{M}{C_{2}} \right)^{\frac{2}{2+\alpha}} \right\} =  \left( \frac{M}{C_{2}} \right)^{\frac{2}{2+\alpha}} =: \tilde{M}
\end{align*}
if $M$ is chosen sufficiently large in terms of $\Omega(0)$, $\alpha$, and $C_{2}$. The idea behind this is that whenever $\Omega(t) \geq \tilde{M}$, \eqref{eq:B:energy} shows that at the points $\bar x$ where $\Omega(t)$ is attained, we have $\partial_t |\omega(\bar x)|^2 \leq 0$, and so $\Omega(t)$ cannot exceed the value $\tilde{M}$. Feeding the above bound back into \eqref{eq:B:13}, we obtain that in fact on $[0,T_{M}]$ we must have
\begin{align*} 
 &\partial_t |\nabla \theta (x_\theta,t)|^2 + C_{1} \Theta(t)^{2+\beta} \leq C_{0} \Theta(t)^2 \left( 1+ \tilde{M} + \tilde{M} \log_{+} \left(1 + T (1+ K(T) \tilde{M} + M)^{\Gamma} \right) \right)
\end{align*}
at the points $x_\theta$ where $\Theta(t)$ is attained, and hence using a similar argument we get
\begin{align} 
\Theta(t)^{\beta} \leq \max\left\{ \Theta(0)^{\beta} , \frac{C_{0}}{C_{1}} \left( 1+ \tilde{M} + \tilde{M} \log_{+} \left(1 + T (1+ K(T) \tilde{M} + M)^{\Gamma} \right) \right)\right\}.\label{eq:B:14}
\end{align}
To conclude the proof we claim that if $M$ is chosen sufficiently large, the right side of \eqref{eq:B:14} can be made in fact less than $(M/2)^{\beta}$, which would then contradict the maximality of $T_{M}$. Recall that $\tilde{M} \approx M^{\frac{2}{2+\alpha}}$, and hence  up to constants the right side of \eqref{eq:B:14} is equal to $1 + M^{\frac{2}{2+\alpha}} (1 + \log_{+} M)$. Since $2/(2+\alpha) < \beta$, for any positive constant $C_{\infty}>0$, by letting $M$ be sufficiently large  we may ensure that 
\begin{align*}
1 + M^{\frac{2}{2+\alpha}} (1 + \log_{+} M)  \leq \frac{M^{\beta}}{C_{\infty}}.
\end{align*}
 It is clear that $M$ may be chosen to only depend on $T, K(T), \Omega(0), \Theta(0), C_{0},C_{1},C_{2}, \alpha,\beta$, i.e. on norms of the initial data, the candidate blow-up time, and on universal constants. This is the only place in the proof where we use the main assumption of the theorem, namely $\beta > 2/(2+\alpha)$.
 
 Thus we have proven that on $[0,T)$ $\Theta(t)$ can never exceed a finite constant $M>0$ (which may be computed in terms of the initial data and $T$), and so $\Theta(t)$ cannot blow-up as $t\rightarrow T$ whenever $T<\infty$, concluding the proof of the theorem.
\end{proof}

\begin{proof}[Proof of  \eqref{eq:B:energy}]
As in \eqref{eq:cor:Lip:3}, we first use a version of the Beale-Kato-Majda inequality as proven in~\cite[Theorem 1]{KT} and obtain
\begin{align} 
\|\nabla u\|_{L^\infty} \leq c \left(1 + \|\omega\|_{L^\infty} (1 + \log_+ \|\omega\|_{H^s} ) \right) \label{eq:app:1}
\end{align}
for some positive universal constant $c>0$, where $s>1$. Therefore, in order to prove \eqref{eq:B:energy} it is sufficient to find a bound on $\omega$ in $H^s$, for some $s>1$, which depends {\em polynomially} on $\|\omega\|_{L^\infty}$, and on the a priori controlled quantities $\| u\|_{L^2}$ and $\| \theta\|_{L^2 \cap L^\infty}$. 

Applying $\Delta$ to \eqref{eq:B:4}, multiplying with $\Delta \omega$ and integrating over $\RR^2$ we obtain
\begin{align}  
\frac{1}{2} \frac{d}{dt} \|\Delta \omega \|_{L^2}^2 + \frac{1}{4} \| \Lambda^{2+\alpha/2} \omega\|_{L^2}^{2} \leq \frac{1}{3} \| \Lambda^{2-\alpha/2} \partial_1 \theta \|_{L^2}^2 + T_1 + T_2 \label{eq:app:2}
\end{align}
where
\begin{align*}
 T_1 = 2 \left| \int \Delta u_j \partial_j \omega \Delta \omega \right| \qquad \mbox{and} \qquad 
 T_2 = 4 \left| \int \partial_k u_j  \partial_{jk} \omega \Delta \omega \right|
\end{align*}
and we have used the summation convention on repeated indices. Here we have used the estimate $ab \leq 3a^2/4 + b^2/3$. Upon integrating twice by parts, and using the H\"older inequality, we obtain
\begin{align}
T_1 \leq 2 \| \nabla \Delta u \|_{L^2} \| \Delta \omega \|_{L^2} \|\omega\|_{L^\infty} &\leq c  \| \Delta \omega \|_{L^2}^2 \|\omega\|_{L^\infty} \leq c  \| u\|_{L^2}^{\frac{2\alpha}{6+\alpha}} \| \Lambda^{2+\alpha/2} \omega \|_{L^2}^{\frac{12}{6+\alpha}} \|\omega\|_{L^\infty}. \label{eq:app:3}
\end{align}
For the last bound of \eqref{eq:app:3} we have used the Gagliardo-Nirenberg inequality. The bound on $T_2$ is a bit more involved. Let $p = (4-\alpha)/(2-\alpha)$. Then $p \in (2,4)$, and also $p< 4/(2-\alpha)$ since $\alpha < 1$. The H\"older, Calder\'on-Zygmund, and the Gagliardo-Nirenberg inequalities give
\begin{align} 
T_2 \leq c \| \nabla u \|_{L^{\frac{p}{p-2}}} \| \Delta \omega\|_{L^p}^{2}  
&\leq c \| \omega \|_{L^{\frac{p}{p-2}}} \| \Delta \omega\|_{L^p}^{2} \notag\\
&\leq c \left( \| \omega\|_{L^2}^{\frac{2\alpha}{4-\alpha}} \|\omega\|_{L^\infty}^{\frac{4-3\alpha}{4-\alpha}}\right) \left(\|u\|_{L^2}^{\frac{\alpha(2-\alpha)}{(4-\alpha)(6+\alpha)}}  \| \Lambda^{2+\alpha/2} \omega\|_{L^2}^{\frac{4(6-\alpha)}{(4-\alpha)(6+\alpha)}} \right)^{2} \notag\\
&\leq c \|u\|_{L^2}^{\gamma_1(\alpha)}\| \Lambda^{2+\alpha/2} \omega\|_{L^2}^{\gamma_2(\alpha)} \|\omega\|_{L^\infty}^{\gamma_3(\alpha)} \label{eq:app:4}
\end{align}
where $\gamma_1(\alpha) = \frac{12 \alpha}{(4-\alpha)(6+\alpha)}$, $\gamma_2(\alpha) = \frac{48 -4\alpha}{(4-\alpha)(6+\alpha)}$, and $\gamma_3(\alpha) = \frac{24 - 14 \alpha - 3\alpha^2}{(4-\alpha)(6+\alpha)}$. While the specific values of $\gamma_1, \gamma_2$, and $\gamma_3$ are highly irrelevant, what is important is that $\gamma_2 < 2$, and $\gamma_1 + \gamma_2 + \gamma_3 = 3$. We obtain from the estimate \eqref{eq:app:2}, the bounds \eqref{eq:app:3}--\eqref{eq:app:4}, and the $\varepsilon$-Young inequality that
\begin{align} 
\frac{d}{dt} \|\Delta \omega \|_{L^2}^2  +\frac{1}{4} \| \Lambda^{2+\alpha/2} \omega\|_{L^2}^{2} \leq \frac{2}{3} \| \Lambda^{3-\alpha/2} \theta \|_{L^2}^2  + c \|u \|_{L^2}^2 \|\omega\|_{L^\infty}^{\frac{6+\alpha}{6}} + c \| u \|_{L^2}^{\frac{2\gamma_1(\alpha)}{2 - \gamma_2(\alpha)}} \|\omega \|_{L^\infty}^{\frac{2\gamma_3(\alpha)}{2 - \gamma_2(\alpha)}}, \label{eq:app:5}
\end{align}
where $c = c(\alpha)$ is a positive constant. Thus is it left to obtain a bound on $\| \Lambda^{3-\alpha/2} \theta\|_{L^2}^{2} = \| \Lambda^{r + \beta/2} \theta\|_{L^2}^{2}$, where we let $r = 3-\alpha/2 - \beta/2 >2$. Applying $\Lambda^r$ to \eqref{eq:B:3} and taking an $L^2$ inner product with $\Lambda^r \theta$, we obtain
\begin{align} 
\frac{1}{2} \frac{d}{dt} \| \Lambda^r \theta\|_{L^2}^{2} + \| \Lambda^{r +\beta/2} \theta\|_{L^2}^{2} &\leq \left| \int_{\RR^2} [ \Lambda^r, u \cdot \nabla ] \theta\; \Lambda^r \theta \right| \label{eq:app:6}
\end{align}
where $[\cdot,\cdot]$ denotes a commutator. We use the classical commutator estimate
\begin{align*} 
\|  [ \Lambda^r, u \cdot \nabla ] \theta \|_{L^2} \leq C \left( \| \nabla u\|_{L^{\infty}} \| \Lambda^r \theta\|_{L^2} + \| \Lambda^r u\|_{L^2} \|\nabla \theta \|_{L^{\infty}} \right).
\end{align*}
We have 
\begin{align} 
\| \Lambda^r \theta\|_{L^2}^{2} \| \nabla u \|_{L^\infty} &\leq \|\Lambda^{r+\beta/2} \theta\|_{L^2}^{\frac{4r}{2r +\beta} } \|\theta\|_{L^2}^{\frac{2\beta}{2r +\beta} } \| \nabla u \|_{L^\infty} \notag\\
&\leq \frac{1}{24}  \|\Lambda^{r+\beta/2} \theta\|_{L^2}^2 + c \|\theta_0\|_{L^2}^{\frac{\beta}{r}}  \| \nabla u \|_{L^\infty}^{\frac{2r+\beta}{2r}} \notag\\
&\leq  \frac{1}{24}  \|\Lambda^{r+\beta/2} \theta\|_{L^2}^2 + C_1( \|\theta_0\|_{L^2}, \alpha,\beta) \|\nabla u\|_{L^\infty}^{\delta_{1}(\alpha,\beta)} \label{eq:app:7}.
\end{align}
where $\delta_{1}(\alpha,\beta) = \frac{6-\alpha}{6-\alpha-\beta}$.
For the second term in the commutator we may bound 
\begin{align} 
\| \Lambda^r \theta \|_{L^2} \| \Lambda^r u\|_{L^2} \|\nabla \theta \|_{L^{\infty}} 
&\leq \|\Lambda^{r-1} \omega\|_{L^{2}} \|\nabla \theta\|_{L^{\infty}} \| \theta \|_{L^{2}}^{\frac{\beta}{2r+\beta}} \| \Lambda^{r+\beta/2} \theta\|_{L^{2}}^{\frac{2r}{2r+\beta}}\notag\\
&\leq \frac{1}{24} \| \Lambda^{r+\beta/2} \theta\|_{L^2}^{2} + C_2 ( \|\theta_0\|_{L^2}, \alpha,\beta)  \|\Lambda^{2-\frac{\alpha+\beta}{2}} \omega\|_{L^2}^{\delta_{2}(\alpha,\beta)} \|\nabla \theta\|_{L^{\infty}}^{\delta_{2}(\alpha,\beta)}\label{eq:app:8}
\end{align}
where $\delta_2(\alpha,\beta) = \frac{2(6-\alpha-\beta)}{6-\alpha+\beta}$. From \eqref{eq:app:6}, \eqref{eq:app:7}, and \eqref{eq:app:8} we thus obtain
\begin{align*} 
\frac{d}{dt} \| \Lambda^r \theta\|_{L^2}^{2} + \frac{1}{6} \| \Lambda^{r +\beta/2} \theta\|_{L^2}^{2} \leq C_1 \|\nabla u\|_{L^\infty}^{\delta_1(\alpha,\beta)} + C_2 \|\Lambda^{2-\frac{\alpha+\beta}{2}} \omega\|_{L^2}^{\delta_2(\alpha,\beta)}\|\nabla \theta\|_{L^{\infty}}^{\delta_{2}(\alpha,\beta)}
\end{align*}
and summing with \eqref{eq:app:5} we obtain
\begin{align} 
& \frac{d}{dt} \|\Delta \omega \|_{L^2}^2 +\frac{d}{dt} \| \Lambda^r \theta\|_{L^2}^{2}  +\frac{1}{4} \| \Lambda^{2+\alpha/2} \omega\|_{L^2}^{2} + \frac{1}{3} \| \Lambda^{r +\beta/2} \theta\|_{L^2}^{2} \notag\\
& \qquad   \leq c \|u \|_{L^2}^2 \|\omega\|_{L^\infty}^{\frac{6+\alpha}{6}}+  c \| u \|_{L^2}^{\frac{2\gamma_1(\alpha)}{2 - \gamma_2(\alpha)}} \|\omega \|_{L^\infty}^{\frac{2\gamma_3(\alpha)}{2 - \gamma_2(\alpha)}}  + C_1 \|\nabla u\|_{L^\infty}^{\delta_1(\alpha,\beta)} + C_2 \|\Lambda^{2-\frac{\alpha+\beta}{2}} \omega\|_{L^2}^{\delta_2(\alpha,\beta)} \|\nabla \theta\|_{L^{\infty}}^{\delta_{2}(\alpha,\beta)}\label{eq:app:9}
\end{align}
with $\gamma_1, \gamma_2,\gamma_3,\delta_1$ and $\delta_2$ as defined above.
To conclude, we bound
\begin{align} 
\|\nabla u \|_{L^\infty}^{\delta_1(\alpha,\beta)} \leq \| u \|_{L^2}^{\delta_1(\alpha,\beta) \frac{2+\alpha}{4+\alpha}} \| \Lambda^{2+\alpha/2} \omega\|_{L^2}^{\delta_1(\alpha,\beta) \frac{4}{6+\alpha}} \leq \frac{1}{8} \| \Lambda^{2+\alpha/2} \omega\|_{L^2}^2 + C \| u \|_{L^2}^{\gamma_4(\alpha,\beta)} \label{eq:app:10}
\end{align}
for some $\gamma_4(\alpha,\beta) >0 $. Here we used that 
$$
\delta_1(\alpha,\beta) \frac{4}{6+\alpha} = \frac{4 (6-\alpha)}{(6+\alpha)(6-\alpha-\beta)}<2.
$$ 
Also, we have
\begin{align} \label{eq:app:11}
\|\Lambda^{2-\frac{\alpha+\beta}{2}} \omega\|_{L^2}^{\delta_2(\alpha,\beta)}  \|\nabla \theta\|_{L^{\infty}}^{\delta_{2}(\alpha,\beta)} 
&\leq \| u\|_{L^2}^{\delta_2(\alpha,\beta) \frac{2 \alpha + \beta}{6+\alpha}} \| \Lambda^{2+\alpha/2} \omega\|_{L^2}^{\delta_2(\alpha,\beta)  \frac{6-\alpha-\beta}{6+\alpha}}  \|\nabla \theta\|_{L^{\infty}}^{\delta_{2}(\alpha,\beta)} \notag\\
&\leq \frac{1}{16}  \| \Lambda^{2+\alpha/2} \omega\|_{L^2}^2 + c \|u \|_{L^2}^{\gamma_5(\alpha,\beta)} \|\nabla \theta\|_{L^{\infty}}^{\gamma_{6}(\alpha,\beta)}
\end{align}
for some $\gamma_5(\alpha,\beta)$ and $\gamma_{6}(\alpha,\beta)> 0$. In the above we have used 
$$
\delta_2(\alpha,\beta) \frac{6-\alpha-\beta}{6+\alpha} = \frac{2(6-\alpha-\beta)^{2}}{(6+\alpha)(6-\alpha+\beta)} <2.
$$ 
Hence, inserting \eqref{eq:app:10}--\eqref{eq:app:11} into \eqref{eq:app:9}, we obtain the a priori estimate
\begin{align} 
& \frac{d}{dt} \left( \|\Delta \omega \|_{L^2}^2 + \| \Lambda^r \theta\|_{L^2}^{2}\right)  +\frac{1}{16} \| \Lambda^{2+\alpha/2} \omega\|_{L^2}^{2} + \frac{1}{3} \| \Lambda^{r +\beta/2} \theta\|_{L^2}^{2} \notag\\
& \qquad \qquad \leq C_{3} \left(1 + \|u \|_{L^2} + \|\omega\|_{L^\infty}+   \|\nabla \theta\|_{L^{\infty}}\right)^{\Gamma(\alpha,\beta)} \label{eq:app:12}
\end{align}
for some $C_{3}>0$ which may depend on various norms of the initial data, and for some possibly very large $\Gamma(\alpha,\beta)>0$. Integrating \eqref{eq:app:12} in time, omitting the positive dissipative terms, inserting into \eqref{eq:app:1}, gives
\begin{align*} 
\| \nabla u(t) \|_{L^{\infty}} &\leq C_{4} +C_{4}  \| \omega(t)\|_{L^{\infty}}\notag\\
& \qquad + C_{4} \| \omega(t)\|_{L^{\infty}} \log_{+} \left(1 + \int_{0}^{t}  \left(1 + \|u(\tau) \|_{L^2} + \|\omega(\tau)\|_{L^\infty}+   \|\nabla \theta(\tau)\|_{L^{\infty}}\right)^{\Gamma(\alpha,\beta)} d\tau \right)
\end{align*}
for some sufficiently large $C_{4}$ depending on norms of the initial data, concluding the proof of the a priori estimate \eqref{eq:B:energy}.
\end{proof}

\subsection*{Acknowledgments}PC's research was partially supported by NSF-DMS grant 0804380.


\begin{thebibliography}{99}

\bibitem{BKM} J.T.~Beale, T.~Kato, and A.~Majda, {\em Remarks on the breakdown of smooth solutions for the $3$-D Euler equations}. Comm.~Math.~Phys.~{\bf 94} (1984), no.~1, 61--66.

\bibitem{BT} C.~Bardos and E.S.~Titi, {\em Euler equations of incompressible ideal fluids}. Russian Mathematical Surveys ~{\bf 62} (3) (2007), 409--451.

\bibitem{CV} L.~Caffarelli and  A.~Vasseur, {\em Drift diffusion equations with fractional diffusion and the quasi-geostrophic equation}. Annals of Mathematics~{\bf 171} (2010), no.~3, 1903-1930.

\bibitem{CaoWu} C.~Cao and J.~Wu, {\em Global regularity for the 2D anisotropic Boussinesq equations with vertical dissipation}. Preprint arXiv:1108.2678v1 [math.AP].

\bibitem{Chae} D.~Chae, {\em Global regularity for the 2D Boussinesq equations with partial viscosity terms}. Adv. Math.~{\bf 203} (2006), no.~2, 497--513.


\bibitem{CCW} D.~Chae, P.~Constantin, and J.~Wu, {\em Inviscid models generalizing the 2D Euler and the surface quasi-geostrophic equations}. Arch.~ Ration.~Mech.~Anal.~doi:~10.1007/s00205-011-0411-5.

\bibitem{Ch} J.Y.~Chemin, {\em Fluides parfaits incompressibles}. Ast\'erisque~{\bf 230}, Soc.~Math.~France, Paris, 1995.

\bibitem{C1} P.~Constantin, {\em Euler equations, Navier-Stokes equations and turbulence}. Mathematical foundation of turbulent viscous flows, 1Ð43, Lecture Notes in Math.~{\bf 1871}, Springer, Berlin, 2006. 

\bibitem{C2} P.~Constantin, {\em On the Euler equations of incompressible fluids}. Bull.~Amer.~Math.~Soc.~(N.S.)~{\bf 44} (2007),  no.~4, 603--621.

\bibitem{C3} P.~Constantin, {\em Singular, weak and absent: solutions of the Euler equations}. Phys.~D~{\bf 237} (2008), no.~14--17, 1926--1931.

\bibitem{CIW} P.~Constantin, G.~Iyer, and J.~Wu, {\em Global regularity for a modified critical dissipative quasi-geostrophic equation}.  Indiana Univ.~Math.~J.~{\bf 57} (2008),  no.~6, 2681--2692.

\bibitem{CMT} P.~Constantin, A.~J.~Majda, E.~Tabak, {\em Formation of strong fronts in the 2-D quasi-geostrophic thermal active scalar}. Nonlinearity {\bf 7} (1994), no.~6, 1495--1533.

\bibitem{CW08} P.~Constantin and J.~Wu, {\em Regularity of H\"older continuous solutions of the supercritical quasi-geostrophic equation}. Ann.~Inst.~H.~Poincar\'e Anal.~Non Lin\'eaire~{\bf~25}  (2008),  no.~6, 1103--1110.

\bibitem{CW09} P.~Constantin and J.~Wu, {\em H\"older continuity of solutions of supercritical dissipative hydrodynamic transport equations}. Ann.~Inst.~H.~Poincar\'e Anal.~Non Lin\'eaire~{\bf 26} (2009),  no.~1, 159--180.

\bibitem{CW0} P. ~Constantin and J. ~Wu, {\em Behavior of solutions to 2d quasigeostrophic equations} SIAM J. math. Anal {\bf 30} (1999) 937-948.

\bibitem{CC} A.~C\'ordoba, D.~C\'ordoba, {\em A maximum principle applied to quasi-geostrophic equations}. Comm.~Math.~Phys.~{\bf 249} (2004), no.~3, 511--528.

\bibitem{D} M.~Dabkowski, {\em Eventual Regularity of the Solutions to the Supercritical Dissipative Quasi-Geostrophic Equation}.\\ Geom.~Funct.~Anal.~{\bf 21} (2011), no.~1, 1--13.

\bibitem{DP} R.~Danchin and M.~Paicu, {\em Global existence results for the anisotropic Boussinesq system in dimension two}.
Math.~Models Methods Appl.~Sci.~{\bf 21} (2011), no.~3, 421--457.



\bibitem{HL} T.Y.~Hou and C.~Li, {\em Global well-posedness of the viscous Boussinesq equations}.
Discrete Contin.~Dyn. Syst.~{\bf 12} (2005), no.~1, 1--12.

\bibitem{HKR1} T.~Hmidi, S.~Keraani, and F.~Rousset, {\em Global well-posedness for a Boussinesq-Navier-Stokes system with critical dissipation}. J.~Differential Equations~{\bf 249} (2010), no.~9, 2147--2174.

\bibitem{HKR2} T.~Hmidi, S.~Keraani, and F.~Rousset, {\em Global well-posedness for Euler-Boussinesq system with critical dissipation}. Comm.~Partial Differential Equations~{\bf 36} (2011), no.~3, 420--445.

\bibitem{KN1} A.~Kiselev and F.~Nazarov, {\em Global regularity for the critical dispersive dissipative surface quasi-geostrophic equation}.  Nonlinearity~{\bf 23}  (2010),  no.~3, 549--554.

\bibitem{KN2} A.~Kiselev and F.~Nazarov, {\em A variation on a theme of Caffarelli and Vasseur}. Zap.~Nauchn.~Sem. S.-Peterburg. Otdel. Mat. Inst. Steklov.~(POMI)~{\bf 370} (2009), 58--72.

\bibitem{KNV} A.~Kiselev, F.~Nazarov, and  A.~Volberg, {\em Global well-posedness for the critical 2D dissipative quasi-geostrophic equation}. Invent.~Math.~{\bf 167} (2007), no.~3, 445--453.

\bibitem{KNS} A. ~Kiselev, F. ~Nazarov, R. ~Shterenberg, {\em Blow up and regularity for fractal Burgers equations}, Dynamics of PDE~{\bf 5} (2008), 211-240. 

\bibitem{KT} H.~Kozono and Y.~Taniuchi, {\em Limiting Case of the Sobolev Inequality in BMO, with Application to the Euler Equations}. Comm.~Math. Phys.~{\bf 214} (2000), 191--200.


\bibitem{LLT} A. Larios, E. Lunasin, and E.S. Titi, {\em Global well-posedness for the 2D Boussinesq system without heat diffusion and with either anisotropic viscosity or inviscid voigt-$\alpha$ regularization}. Preprint arXiv:1010.5024v1 [math.AP].

\bibitem{MB} A.J.~Majda and A.L.~Bertozzi, {\em Vorticity and incompressible flow}. Cambridge Texts in Applied Mathematics, vol.~27,  Cambridge University Press,
Cambridge, 2002.


\bibitem{R} S.~Resnick, {\em Dynamical problems in nonlinear advective partial differential equations}. Ph.D. Thesis,
University of Chicago, 1995.

\bibitem{Silvestre} L.~Silvestre, {\em Eventual regularization for the slightly supercritical quasi-geostrophic equation}. Ann.~Inst.~H.~Poincar\'e Anal.~Non Lin\'eaire~{\bf 27} (2010), 693--704.

\bibitem{Stein} E.M.~Stein, {\em Harmonic analysis: real-variable methods, orthogonality, and oscillatory integrals}. Princeton Mathematical Series~{\bf 43}, Princeton, NJ, Princeton University Press, 1993.

\end{thebibliography}
\end{document}